\theoremstyle{thmstyleone}
\newtheorem{theorem}{Theorem}
\newtheorem{proposition}[theorem]{Proposition}%
\newtheorem{lemma}{Lemma}%
\newtheorem{corollary}{Corollary}%
\newtheorem{conditional proposition}{Conditional Proposition}
\theoremstyle{thmstyletwo}
\newtheorem{remark}{Remark}%
\theoremstyle{thmstylethree}
\newcommand{\zero}{\mathbf{0}}
\newcommand{\one}{\mathbf{1}}
\newcommand{\abs}[1]{\lvert#1\rvert}
\let\oldprec\prec
\renewcommand{\prec}{\oldprec\hspace{-2pt}}
\let\oldsucc\succ
\renewcommand{\succ}{\hspace{-2pt}\oldsucc}
\newcommand{\edit}[1]{#1}
\begin{document}

\title[An extension to ``A subsemigroup of the rook monoid'']{
An extension to ``A subsemigroup of the rook monoid''}

\author*[1]{George Fikioris}\email{gfiki@ece.ntua.gr}
\author[2]{Giannis Fikioris}\email{gfikioris@cs.cornell.edu}

\affil*[1]{\orgdiv{School of Electrical and Computer Engineering}, \orgname{National Technical University of Athens}, \orgaddress{\street{Zografou}, \city{Athens}, \postcode{GR 15773}, \state{Attica}, \country{Greece}}}

\affil[2]{\orgdiv{Department of Computer Science}, \orgname{Cornell University}, \orgaddress{\street{Hoy Rd}, \city{Ithaca}, \postcode{NY 14853}, \state{New York}, \country{USA}}}

\abstract{
\unboldmath{}
A recent paper studied an inverse submonoid $M_n$ of the rook monoid, by representing the nonzero elements of $M_n$ via certain triplets belonging to $\mathbb{Z}^3$. In this short note,  we allow the triplets to belong to $\mathbb{R}^3$. We thus study a new inverse monoid $\overline{M}_n$, which is a supermonoid of $M_n$. We point out similarities and find essential differences. We show that $\overline{M}_n$ is a noncommutative, periodic, combinatorial, fundamental, completely semisimple, and strongly $E^*$-unitary inverse monoid.
}

\keywords{inverse semigroups, noncommutative semigroups, combinatorial semigroups, funadmental semigroups, strongly $E^*$-unitary semigroups}

\maketitle

\textbf{MSC codes.} 20M18, 20M12

\bmhead{Acknowledgments}
We thank the reviewer, whose comments and suggestions notably improved this work.
The work of Giannis Fikioris was supported in part by the Department of Defense (DoD) through the National Defense Science \& Engineering Graduate (NDSEG) Fellowship Program, the Onassis Foundation -- Scholarship ID: F ZS 068-1/2022-2023, and AFOSR grant FA9550-23-1-0068.

\section{Introduction}

The symmetric inverse semigroup $\mathcal{IS}_n$, also known as the rook monoid, consists of the partial injective transformations of $\{1,2,\ldots,n\}$ \cite{Ganyushkin,solomon}. Any element of $\mathcal{IS}_n$ can be represented as an $n\times n$ matrix whose entries are $0$ or $1$, with at most one $1$ in every row and every column.

In a previous paper \cite{fikioris-fikioris}, we introduced a submonoid $M_n$ of $\mathcal{IS}_n$ and studied its properties. The monoid $M_n$ consists of the zero matrix together with those matrices of $\mathcal{IS}_n$ whose $1$s lie on a single diagonal and form an uninterrupted block (i.e., no $0$ lies between any two $1$s). Let $d$ be the said diagonal ($d=-n+1,\ldots, n-1$, with $d=0$ being the main diagonal), let $k$ be the row of the northwestern $1$, and let $m$ be the row of the southeastern $1$. The study of \cite{fikioris-fikioris} was facilitated by representing the elements of $M_n$ as triplets $\langle d,k,m\rangle \in\mathbb{Z}^3$ ($d$, $k$, and $m$ are appropriately restricted), and developing a closed-form expression representing the product of two elements.

This short note is an extension that allows $\langle d,k,m\rangle \in\mathbb{R}^3$; the restrictions on the parameters $d$, $k$, $m$, as well as the product formula, remain unaltered. We thus study a new monoid $\overline{M}_n$, of which the
$M_n$ of \cite{fikioris-fikioris} is a submonoid. For reasons of symmetry, we  switch the order of the first two arguments and use the notation $\prec k,d, m\succ$ for an $x\in\overline{M}_n$, so that
\begin{equation}
\label{eq:notationchange}
  x=\prec k,d, m \succ=\langle d,k,m \rangle,\quad k,d,m\in\mathbb{Z}.
\end{equation}
To facilitate comparisons with ``the integer case,'' however, we maintain much of the notation of \cite{fikioris-fikioris}. For example, we retain the symbol $x^T$ for the semigroup inverse; the underlying reason is that inverting $x\in M_n$ amounts to transposing the matrix represented by $x$.  As in \cite{fikioris-fikioris}, $\zero$ and $\one$ denote monoid zero and identity, and ideal means two-sided ideal. We use the traditional notations
for Green’s relations, associated equivalence classes, and principal ideals, as well as the usual notations $\mathbf{r}(x)$ and $\mathbf{d}(x)$ \cite{Lawson,lawson2023introduction} for $xx^T$ and $x^Tx$, respectively. A $j$th root of $x\in \overline{M}_n$ is a $y\in \overline{M}_n$ such that $x= y^j$ ($j \in \mathbb{N}$).

\section{The inverse monoid \texorpdfstring{$\overline{M}_n$}{Mn}}
\label{section:definitions}

Let $n\in\mathbb{Z}$ with $n\ge 2$. Our definition of $\overline{M}_n$ is
\begin{equation}
\label{eq:m-definition}
\begin{split}
\overline{M}_n=\{\zero\}
\cup
\{\prec k, & d, m \succ:\  k,d,m\in \mathbb{R};\\ &1-\min(0,d)\le k \le m\le n-\max(0,d)\}. \end{split}
\end{equation}
Note that the restrictions in (\ref{eq:m-definition}) further imply
\begin{equation}
\label{eq:restrictions-further}
    -(n-1)\le d\le n-1\quad \mathrm{and}\quad  1\le k\le m\le n.
\end{equation}
As in \cite{fikioris-fikioris}, the formula for the product of two nonzero elements is
\begin{equation}
\label{eq:multiplication-finite}
\prec k, d, m \succ \prec k', d', m' \succ=
\begin{cases}
\prec k'', d'', m'' \succ,\quad k'' \le m'',\\\
\zero,\quad k''> m'',
\end{cases}
\end{equation}
in which the parameters $k''$, $d''$, and $m''$ are 
\begin{equation}
\label{eq:doubleprime}
k''=\max(k,k'-d),\quad
d''=d+d',\quad
m''=\min(m,m'-d).
\end{equation}
We can use the definitions (\ref{eq:m-definition}), (\ref{eq:multiplication-finite}), and (\ref{eq:doubleprime}) to show that $\overline{M}_n$ is a monoid with $\one=\prec 1, 0, n \succ$. We can also verify a formula for powers:
\begin{lemma}
\label{lemma:powers}
For $x=\prec k, d, m \succ\in \overline{M}_n\setminus\{\zero\}$ and $j\in\mathbb{N}$ we have
\begin{equation}
\label{eq:powers-1}
x^j=\begin{cases}
\prec k^{(j)}, d^{(j)}, m^{(j)}\succ,\quad \textrm{if}\quad  k^{(j)}\le m^{(j)},
\\
\zero,\quad \textrm{if}\quad  k^{(j)}>m^{(j)},
\end{cases}
\end{equation}
where
\begin{equation}
\label{eq:powers-2}
k^{(j)}=k-(j-1)\min(0,d),\quad 
d^{(j)}=jd,\quad 
m^{(j)}=m-(j-1)\max(0,d).
\end{equation}
In particular, $x^2=x$ iff $d=0$. 
\end{lemma}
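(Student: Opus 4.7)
My plan is to prove the lemma by induction on $j$. The base case $j=1$ is immediate: the formulas give $k^{(1)}=k$, $d^{(1)}=d$, $m^{(1)}=m$, so $x^1=x$ as required.

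For the inductive step, I would assume the formula holds for $j$ and compute $x^{j+1}=x\cdot x^j$ using the multiplication rules (\ref{eq:multiplication-finite})--(\ref{eq:doubleprime}). If $x^j=\zero$ (i.e.\ $k^{(j)}>m^{(j)}$), then $x^{j+1}=\zero$ trivially; I would also verify consistency by noting that $k^{(j+1)}-m^{(j+1)}=(k^{(j)}-m^{(j)})+|d|\ge k^{(j)}-m^{(j)}>0$, so the predicted value $k^{(j+1)}>m^{(j+1)}$ indeed gives $\zero$. When $x^j=\prec k^{(j)},jd,m^{(j)}\succ$, applying (\ref{eq:doubleprime}) gives $d''=d+jd=(j+1)d=d^{(j+1)}$, and I need to show
\[
\max\bigl(k,k^{(j)}-d\bigr)=k-j\min(0,d),\qquad \min\bigl(m,m^{(j)}-d\bigr)=m-j\max(0,d).
\]
Both identities I would verify by splitting into the cases $d\ge 0$ and $d<0$: in each case one of $\min(0,d),\max(0,d)$ vanishes, so $k^{(j)}-d$ and $m^{(j)}-d$ collapse to simple expressions, and the $\max$/$\min$ is resolved by the sign of $d$. (For instance, when $d\ge0$, $k^{(j)}-d=k-d\le k$, so the $\max$ equals $k=k-j\min(0,d)$.) The resulting element automatically lies in $\overline{M}_n$, since the product of two monoid elements is a monoid element.

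For the final assertion, if $d=0$ then (\ref{eq:powers-2}) gives $k^{(2)}=k$, $m^{(2)}=m$, $d^{(2)}=0$, so $x^2=x$. Conversely, $x^2=x\ne\zero$ forces $d^{(2)}=d$, i.e.\ $2d=d$, hence $d=0$.

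I do not anticipate any genuine obstacle; the only mildly delicate point is keeping the case analysis on the sign of $d$ clean when verifying the $\max$ and $\min$ identities, and ensuring the zero/nonzero dichotomy propagates correctly through the induction.
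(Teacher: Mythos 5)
Your induction is correct and complete: the base case, the sign-of-$d$ case analysis resolving $\max\bigl(k,k^{(j)}-d\bigr)$ and $\min\bigl(m,m^{(j)}-d\bigr)$, the zero-propagation check via $k^{(j+1)}-m^{(j+1)}=\bigl(k^{(j)}-m^{(j)}\bigr)+\abs{d}$, and the iff for $x^2=x$ all hold. The paper gives no explicit proof, presenting the lemma as a direct verification from (\ref{eq:multiplication-finite})--(\ref{eq:doubleprime}), and your induction is exactly that routine verification, so the approaches coincide.
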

Let us define 
\begin{equation}
\label{eq:transpose}
x^T=
\begin{cases}
\zero,\quad x=\zero,\\
\prec k+d, -d, m+d \succ, \quad x=\prec k, d, m \succ\in \overline{M}_n \setminus\{\zero\}.
\end{cases}
\end{equation}
The inequalities in (\ref{eq:m-definition}) ensure that $x^T\in \overline{M}_n$. The multiplication formula (\ref{eq:multiplication-finite}) gives
\begin{equation}
\label{eq:productxtimesinversex}
\mathbf{r}(x)=xx^T=
\begin{cases}
\zero,\quad x=\zero,\\
\prec k, 0, m \succ, \quad x=\prec k, d, m \succ\in M_n\setminus\{\zero\}
\end{cases}
\end{equation}
and
\begin{equation}
\label{eq:productinversextimesx}
\mathbf{d}(x)=x^Tx=
\begin{cases}
\zero,\quad x=\zero,\\
\prec k+d, 0, m+d \succ, \quad x=\prec k, d, m \succ\in M_n\setminus\{\zero\}, 
\end{cases}
\end{equation}
as well as $xx^Tx=x$ and $x^Txx^T=x^T$. Thus $x^T$ is an inverse of $x$ and $\overline{M}_n$ is a regular semigroup. The idempotents are $\zero$ and the elements $\prec k, 0, m \succ$; and by (\ref{eq:multiplication-finite}), these idempotents commute. Accordingly \cite{Lawson}, $\overline{M}_n$ is an inverse semigroup. In sum, we have arrived at  
\begin{proposition}
\label{th:basic-theorem}
The $\overline{M}_n$ defined in (\ref{eq:m-definition})--(\ref{eq:doubleprime}) is a 
noncommutative inverse monoid with zero, whose identity is $\one=\prec 1, 0, n \succ$. The $x^T$ given in (\ref{eq:transpose}) is the unique inverse  of $x\in \overline{M}_n$. The semilattice  of idempotents---to be denoted by $\mathbf{E}\left(\overline{M}_n\right)$---consists of $\zero$ together with all elements $\prec k, 0, m \succ$ (in which $d=0$). 
\end{proposition}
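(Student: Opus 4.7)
The discussion preceding the proposition already supplies most of the ingredients: formulas (\ref{eq:productxtimesinversex}) and (\ref{eq:productinversextimesx}) compute $xx^T$ and $x^Tx$; the identities $xx^Tx = x$ and $x^Txx^T = x^T$ are asserted there; and Lemma~\ref{lemma:powers} pins down $d = 0$ as the defining condition for a nonzero idempotent, from which the description of $\mathbf{E}(\overline{M}_n)$ is immediate. What remains is to verify mechanically: (i) closure and associativity of the product, (ii) that $\prec 1, 0, n\succ$ is a two-sided identity, (iii) that the idempotents commute, so as to promote the regular semigroup $\overline{M}_n$ to an inverse monoid with unique inverses, and (iv) noncommutativity.

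For closure, when $k'' \le m''$ one must check $\prec k'', d'', m''\succ$ against the constraints of (\ref{eq:m-definition}); this is a routine case analysis on the signs of $d$ and $d'$. For associativity, write $x = \prec k_1, d_1, m_1\succ$, $y = \prec k_2, d_2, m_2\succ$, $z = \prec k_3, d_3, m_3\succ$. The key observation is that both $(xy)z$ and $x(yz)$, when nonzero, expand via (\ref{eq:doubleprime}) to the same triplet $\prec K,\, d_1 + d_2 + d_3,\, M\succ$ with
\begin{equation*}
K = \max(k_1,\, k_2 - d_1,\, k_3 - d_1 - d_2),\quad M = \min(m_1,\, m_2 - d_1,\, m_3 - d_1 - d_2).
\end{equation*}
The handling of $\zero$ is equally symmetric: whenever any intermediate $\max$ exceeds the corresponding intermediate $\min$, the three-fold $\max$ already exceeds the three-fold $\min$, so the bracketing cannot affect whether the product is $\zero$. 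This three-fold max/min accounting is the one step that requires genuine care.

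The identity law follows by plug-in: $\prec 1, 0, n\succ\prec k, d, m\succ$ gives $k'' = \max(1, k) = k$, $d'' = d$, $m'' = \min(n, m) = m$ by (\ref{eq:restrictions-further}), and the reverse product is symmetric. For commuting idempotents, (\ref{eq:multiplication-finite}) applied to two nonzero idempotents yields
\begin{equation*}
\prec k, 0, m\succ\prec k', 0, m'\succ = \prec \max(k, k'),\, 0,\, \min(m, m')\succ
\end{equation*}
when $\max(k, k') \le \min(m, m')$ and $\zero$ otherwise, both manifestly symmetric in the two factors. The classical theorem that a regular semigroup with commuting idempotents is an inverse semigroup, with unique inverses, then upgrades $\overline{M}_n$ to an inverse monoid and identifies $x^T$ as \emph{the} inverse of $x$. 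Finally, noncommutativity holds for every $n \ge 2$, since
\begin{equation*}
\prec 1, 1, n-1\succ \prec 2, -1, n\succ = \prec 1, 0, n-1\succ \neq \prec 2, 0, n\succ = \prec 2, -1, n\succ \prec 1, 1, n-1\succ,
\end{equation*}
which is a direct substitution in (\ref{eq:doubleprime}).
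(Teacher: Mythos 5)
Your proof is correct and takes essentially the same approach as the paper, whose own ``proof'' of this proposition is precisely the build-up preceding it: the computations of $xx^T$ and $x^Tx$, the identification of the idempotents via $d=0$, their commutativity, and the standard fact that a regular semigroup with commuting idempotents is an inverse semigroup with unique inverses. You simply make explicit the routine verifications the paper leaves to the reader---closure, the three-fold $\max/\min$ bookkeeping for associativity (including the correct observation that an intermediate zero forces the three-fold $\max$ to exceed the three-fold $\min$, so bracketing cannot matter), the identity law, and an explicit noncommutativity witness---all of which check out.
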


Inverse semigroups are associated with a natural partial order \cite{Lawson,lawson2023introduction,Howie} which, for our nonzero elements, can be  formulated in terms of  triplet parameters:

\begin{corollary}
\label{corollary:natural-partial-order}
Let $\le$ be the natural partial order in $\overline{M}_n\setminus\{\zero\}$. Then
\begin{equation}
\label{eq:natural-partial-order}
    \prec k, d, m \succ \,\le\, \prec k', d', m' \succ\iff d=d',\  k\ge k',\ \mathrm{and}\ m\le m'. 
\end{equation}
\end{corollary}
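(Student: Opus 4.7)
The plan is to work directly from the standard characterization of the natural partial order in an inverse semigroup: $x\le y$ iff $x=ey$ for some idempotent $e$ (equivalently, $x=\mathbf{r}(x)\,y$). Since the forward direction deals with $x\ne\zero$, the idempotent witness $e$ must be nonzero, so $e=\prec k_0,0,m_0\succ$ for some admissible $k_0,m_0$.

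First, I would establish the forward direction. Assume $\prec k,d,m\succ\,\le\,\prec k',d',m'\succ$ and pick an idempotent $e=\prec k_0,0,m_0\succ$ with $\prec k,d,m\succ = e\prec k',d',m'\succ$. Applying the product formulas (\ref{eq:multiplication-finite})--(\ref{eq:doubleprime}) to the right-hand side yields the triplet
\[
\prec \max(k_0,k'),\ d',\ \min(m_0,m')\succ.
\]
Matching components with $\prec k,d,m\succ$ immediately gives $d=d'$, $k=\max(k_0,k')\ge k'$, and $m=\min(m_0,m')\le m'$, which is the right-hand side of (\ref{eq:natural-partial-order}).

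For the converse, assume $d=d'$, $k\ge k'$, $m\le m'$. I would take the natural candidate idempotent $e=\mathbf{r}(x)=\prec k,0,m\succ$, which lies in $\mathbf{E}(\overline{M}_n)$ by Proposition~\ref{th:basic-theorem} and is nonzero because $k\le m$. A direct computation via (\ref{eq:doubleprime}) gives
\[
e\prec k',d',m'\succ=\prec \max(k,k'),\ d',\ \min(m,m')\succ=\prec k,d,m\succ,
\]
using $k\ge k'$, $m\le m'$, and $d=d'$. Hence $x=ey$, so $x\le y$.

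I do not anticipate a real obstacle here: the whole argument is a bookkeeping exercise built on the multiplication rule (\ref{eq:doubleprime}) and the explicit description of idempotents in Proposition~\ref{th:basic-theorem}. The only subtle point is remembering that the natural partial order is defined a priori on all of $\overline{M}_n$, but its restriction to $\overline{M}_n\setminus\{\zero\}$ is what (\ref{eq:natural-partial-order}) describes; the zero case is excluded by hypothesis and needs no separate treatment.
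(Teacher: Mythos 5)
Your proof is correct and takes essentially the same route as the paper: the paper's one-line argument invokes the characterization $x\le y$ iff $x=xx^Ty$ and reads the equivalence off from (\ref{eq:multiplication-finite}) and (\ref{eq:productxtimesinversex}), which is exactly the computation you carry out in detail (your forward direction via an arbitrary idempotent witness $e=\prec k_0,0,m_0\succ$ is just a standard equivalent formulation of the same criterion). No gaps.
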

\begin{proof}
As $x\le y$ iff $x=xx^Ty$ \cite{Lawson,lawson2023introduction,Howie}, the assertion follows easily from (\ref{eq:multiplication-finite}) and (\ref{eq:productxtimesinversex}).
\end{proof}

\begin{corollary}
\label{corollary:e*unitary}
    The inverse semigroup $\overline{M}_n$ is $E^*$-unitary (also called $0$-$E$ unitary).
\end{corollary}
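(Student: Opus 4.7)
The plan is to reduce everything to the triplet description of the natural partial order that has just been established in Corollary \ref{corollary:natural-partial-order}. Recall that an inverse semigroup with zero is called $E^*$-unitary if, for every nonzero idempotent $e$ and every element $x$, the inequality $e\le x$ in the natural partial order forces $x$ to be an idempotent. So the task is purely to translate that abstract condition into a condition on the triplet parameters.

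First I would pick an arbitrary nonzero idempotent. By Proposition \ref{th:basic-theorem} it has the form $e=\prec k,0,m\succ$ with $1\le k\le m\le n$. Then I would take any $x\in\overline{M}_n$ with $e\le x$. Because $e\ne\zero$ and the zero element has no element strictly above it in common with a nonzero element in the Rees-like order, the inequality forces $x\ne\zero$, so I can write $x=\prec k',d',m'\succ$. At this point Corollary \ref{corollary:natural-partial-order} applies and immediately yields $d'=d_e=0$. Hence $x$ has zero ``diagonal'' parameter, so by Proposition \ref{th:basic-theorem} (equivalently, by the ``$x^2=x$ iff $d=0$'' clause in Lemma \ref{lemma:powers}), $x\in\mathbf{E}(\overline{M}_n)$, which is exactly $E^*$-unitarity.

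There is no real obstacle; the substantive work was already carried out in Corollary \ref{corollary:natural-partial-order}, which reveals that the natural partial order always preserves the diagonal index $d$. The present corollary is just the observation that the idempotents of $\overline{M}_n$ are precisely the elements with $d=0$, so ``comparable to an idempotent'' and ``being an idempotent'' coincide for nonzero elements. The proof can accordingly be written in two or three lines.
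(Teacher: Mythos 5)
Your proof is correct and follows essentially the same route as the paper's: take a nonzero idempotent $e=\prec k,0,m\succ$ below some $x$, note $x\ne\zero$, apply Corollary~\ref{corollary:natural-partial-order} to conclude the $d$-parameter of $x$ is $0$, and invoke Proposition~\ref{th:basic-theorem} to identify $x$ as idempotent. The only cosmetic difference is your slightly roundabout justification that $x\ne\zero$ (the clean statement is simply that $\zero$ is the minimum of the natural partial order, so $e\le\zero$ would force $e=\zero$), which the paper handles with the same brevity.
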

\begin{proof}
If $\zero\ne x\le y$ and $x$ is idempotent, then $y\ne \zero$, so we can set $x=\prec k, 0, m \succ$ and $y=\prec k', d', m' \succ$. Then $d'=0$ by (\ref{eq:natural-partial-order}), so that $y$ is idempotent. Therefore $\overline{M}_n$ is $E^*$-unitary by definition \cite{Lawson,lawson2023introduction}.
\end{proof}
Let $C_n$ denote the set of all closed real intervals $[k,m]$ for which $1\le k\le m\le n$. Thus $C_n$ consists of the closed line segments within $[1,n]$.

\begin{corollary}
\label{corollary:semilattice} The semilattice of idempotents
$\mathbf{E}\left(\overline{M}_n\right)$ is isomorphic to $C_n$, with multiplication corresponding to segment intersection.  
\end{corollary}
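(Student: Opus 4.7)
The plan is to exhibit an explicit bijection between $\mathbf{E}\!\left(\overline{M}_n\right)$ and $C_n$ (augmented with an empty segment to absorb the zero idempotent) and then check that it intertwines semigroup multiplication with set-theoretic intersection. Concretely, I would define
$\phi(\zero)=\emptyset$ and $\phi(\prec k,0,m\succ)=[k,m]$.
By Proposition \ref{th:basic-theorem} the nonzero idempotents are in one-to-one correspondence with pairs $(k,m)$ satisfying $1\le k\le m\le n$, which are precisely the data specifying elements of $C_n$; hence $\phi$ is a bijection onto $C_n\cup\{\emptyset\}$.

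Next I would verify the homomorphism property by a direct computation via (\ref{eq:multiplication-finite}) and (\ref{eq:doubleprime}). For two nonzero idempotents $\prec k,0,m\succ$ and $\prec k',0,m'\succ$, setting $d=d'=0$ yields $d''=0$, $k''=\max(k,k')$, and $m''=\min(m,m')$. When $\max(k,k')\le\min(m,m')$, the product is the idempotent $\prec \max(k,k'),0,\min(m,m')\succ$, which maps under $\phi$ to
$[\max(k,k'),\min(m,m')]=[k,m]\cap[k',m']$.
When $\max(k,k')>\min(m,m')$---exactly the condition for $[k,m]\cap[k',m']=\emptyset$---the product is $\zero$, which maps to $\emptyset$. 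Finally, products involving $\zero$ are trivial on both sides, since $\zero$ annihilates and $\emptyset$ is absorbing under intersection.

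I do not anticipate any genuine obstacle: the proof is a bookkeeping calculation made essentially inevitable by the fact that the nonzero idempotents are characterized by $d=0$, so that the product formula collapses to componentwise $\max$ and $\min$ on the endpoints. The only stylistic point worth flagging at the outset is how to treat $\zero$---either by adjoining an empty segment to $C_n$, as I do above, or by stating the isomorphism between the semilattices with zero removed; I would choose the former to keep the correspondence a genuine semilattice isomorphism.
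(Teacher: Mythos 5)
Your proof is correct and takes essentially the same approach as the paper's: set $d=d'=0$ in (\ref{eq:multiplication-finite})--(\ref{eq:doubleprime}) so that the product formula collapses to $\max$/$\min$ on the endpoints, which is exactly intersection of the segments $[k,m]$ and $[k',m']$. Your explicit handling of $\zero$ (adjoining an empty segment so the map $\prec k,0,m\succ\,\mapsto [k,m]$, $\zero\mapsto\emptyset$ is a genuine isomorphism of semilattices with zero) is a point the paper's terse proof leaves implicit, but it is a presentational refinement rather than a different argument.
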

\begin{proof}
Taking $d=d'=0$ in (\ref{eq:m-definition})--(\ref{eq:doubleprime}) gives $k''=\max(k,k')$, $m''=\min(m,m')$, and
\begin{equation*}
\prec k, 0, m \succ \prec k', 0, m' \succ=
\begin{cases}
\prec k'', 0, m''\succ,\quad k''\le  m'',\\
\zero,\quad k''>m'',
\end{cases}
\end{equation*}
for $1\le k \le m \le n$ and $1\le k' \le m' \le n$. This proves the assertion.
\end{proof}

Any inverse semigroup gives rise to an underlying groupoid, within which a restricted product $x\cdot y$ is defined \cite{Lawson,lawson2023introduction}. In $\overline{M}_n$, the underlying groupoid is readily described using triplet parameters:
\begin{corollary} 
\label{corollary:underlying-groupoid}
Let $x,y\in \overline{M}_n$. If $x=\zero$, then $x\cdot y$ is defined iff $y=\zero$.  And if $\zero\ne x=\prec k, d, m \succ$, then  $x\cdot y$ is defined iff $\zero\ne y=\prec k', d', m' \succ$, with
    \begin{equation}
    \label{eq:y-restricted-product}
    k'=k+d\mathrm{\ and\ }m'=m+d,
    \end{equation}
    in which case
    \begin{equation}
        x\cdot y=xy=\prec k, d+d', m \succ.
    \end{equation}
\end{corollary}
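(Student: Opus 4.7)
The plan is to recall that in any inverse semigroup with zero, the restricted product $x\cdot y$ of the underlying groupoid is defined precisely when $\mathbf{d}(x)=\mathbf{r}(y)$, and in that case $x\cdot y=xy$. So the task reduces to translating this condition into the triplet parameters using the formulas (\ref{eq:productxtimesinversex}) and (\ref{eq:productinversextimesx}) for $\mathbf{r}$ and $\mathbf{d}$, and then evaluating the ordinary product $xy$ by (\ref{eq:multiplication-finite})--(\ref{eq:doubleprime}) when the condition holds.

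First I would dispose of the cases involving $\zero$. Since $\mathbf{d}(\zero)=\mathbf{r}(\zero)=\zero$ and since $\mathbf{r}(y)=\zero$ iff $y=\zero$ (and similarly for $\mathbf{d}$), the relation $\mathbf{d}(x)=\mathbf{r}(y)$ with $x=\zero$ forces $y=\zero$, and vice versa. This gives the first assertion. For the main case, assume $x=\prec k,d,m\succ$ and $y=\prec k',d',m'\succ$ are both nonzero. Then (\ref{eq:productinversextimesx}) gives $\mathbf{d}(x)=\prec k+d,0,m+d\succ$, while (\ref{eq:productxtimesinversex}) gives $\mathbf{r}(y)=\prec k',0,m'\succ$. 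Equating these two idempotents (comparing the triplets entry by entry, which is legitimate because nonzero elements have unique triplet representations) yields exactly the conditions $k'=k+d$ and $m'=m+d$ in (\ref{eq:y-restricted-product}).

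Assuming (\ref{eq:y-restricted-product}) holds, I would then substitute into (\ref{eq:doubleprime}): we get $k''=\max(k,k'-d)=\max(k,k)=k$, $d''=d+d'$, and $m''=\min(m,m'-d)=\min(m,m)=m$. Since $k\le m$ (as $x\in\overline{M}_n\setminus\{\zero\}$), the first branch of (\ref{eq:multiplication-finite}) applies, yielding $xy=\prec k,d+d',m\succ$. One should also briefly note that this triplet does lie in $\overline{M}_n$: the range conditions on $k,d+d',m$ follow from those satisfied by $y=\prec k+d,d',m+d\succ\in\overline{M}_n$, namely $1-\min(0,d')\le k+d$ and $m+d\le n-\max(0,d')$, which rearrange to $1-\min(0,d+d')\le k$ and $m\le n-\max(0,d+d')$ using $d+d'=d+d'$ and the identities $\min(0,d)+\min(0,d')\le\min(0,d+d')$ in the relevant combined form — but this is automatic since $xy$ is computed as an actual product in $\overline{M}_n$.

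There is no real obstacle here; the argument is entirely bookkeeping. The only mild subtlety is remembering the standard groupoid definition $x\cdot y\text{ defined}\iff\mathbf{d}(x)=\mathbf{r}(y)$ and being careful with the zero case, where $\mathbf{d}(\zero)=\mathbf{r}(\zero)=\zero$ means that $\zero$ composes only with $\zero$ in the groupoid.
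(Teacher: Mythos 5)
Your proposal is correct and takes essentially the same approach as the paper: the standard fact that $x\cdot y$ is defined iff $\mathbf{d}(x)=\mathbf{r}(y)$ (with $x\cdot y=xy$), translated into triplet parameters via \eqref{eq:productxtimesinversex} and \eqref{eq:productinversextimesx}, and then the product evaluated from \eqref{eq:multiplication-finite}--\eqref{eq:doubleprime}; you simply spell out the bookkeeping the paper leaves implicit. The half-finished inequality manipulation in your closure aside is unnecessary, as you yourself note: $xy$ lies in $\overline{M}_n$ automatically because $\overline{M}_n$ is closed under its multiplication.
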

\begin{proof}
In an inverse semigroup, $x\cdot y$ is defined iff $\mathbf{d}(x)=\mathbf{r}(y)$, in which case $x\cdot y=xy$ \cite{Lawson,lawson2023introduction}. The assertions then follow from (\ref{eq:multiplication-finite}), (\ref{eq:doubleprime}), (\ref{eq:productxtimesinversex}), and 
(\ref{eq:productinversextimesx}).
\end{proof}
\section{Connections with \texorpdfstring{${M}_n$}{Mn}; nilpotents; graphical interpretations;  the height function}
\label{section:graphical}

We obtain the submonoid $M_n$ of \cite{fikioris-fikioris} if, in (\ref{eq:m-definition}), we replace the condition $k,d,m\in\mathbb{R}$ by the more restrictive one   $k,d,m\in\mathbb{Z}$. In $M_n$, the triplet of integers represents the $n\times n$ matrix described in \cite{fikioris-fikioris} and our Introduction. Analogously, we can interpret the triplets of $\overline{M}_n$ as line segments that are contained within a $(n-1)\times (n-1)$ square and are parallel to the diagonal shown in Fig.~\ref{fig:square}. Note that segment endpoints are permitted to lie on the square boundary, including its corners. 

\begin{remark}
    It goes without saying that a segment representing a nonzero product remains within the closed square. In particular, it is true that
\begin{equation}
\label{eq:condition-for-nonzero}
    \prec k, d, m \succ \prec k', d', m' \succ\ne \zero\implies  \abs{d+d'}\le n-1.
\end{equation}
(\ref{eq:condition-for-nonzero}), which can readily be verified directly from (\ref{eq:m-definition})--(\ref{eq:doubleprime}), will prove useful in Section~\ref{section:additional}. Stated otherwise, the inequality $\abs{d+d'}>n-1$ guarantees that the product is zero.
\end{remark}

Much of Section~\ref{section:definitions} can now be interpreted graphically: $\zero$ corresponds to the square being empty,  while $\one$ corresponds to the diagonal shown in Fig.~\ref{fig:square}. Two inverse segments ($x$ and $x^T$) are symmetric with respect to this diagonal. The segments that lie \textit{on} the diagonal make up the nonzero idempotents,  with multiplication in  $\mathbf{E}\left(\overline{M}_n\right)$ corresponding to segment intersection, as found in Corollary~\ref{corollary:semilattice}. 

The segment $\mathbf{r}(x)$ ($\mathbf{d}(x)$) is obtained by horizontally (vertically) translating $x$ till the diagonal. It follows that the segments $x$ and $y$ are horizontal (vertical) translations of each other iff $\mathbf{r}(x)=\mathbf{r}(y)$ ($\mathbf{d}(x)=\mathbf{d}(y)$), a fact to be used in Section~\ref{section:additional}.

Two segments of $\overline{M}_n$ are comparable---in the sense of the natural partial order discussed in Corollary~\ref{corollary:natural-partial-order}---iff one lies upon and is contained within the other, in which case the shorter segment is $\le$ the longer one.  Corollary~\ref{corollary:underlying-groupoid} means that the restricted product $x\cdot y$ of the underlying groupoid is defined iff $y$ is a horizontal translation of the inverse $x^T$ 
(equivalently, iff $x$ is a vertical translation of the inverse $y^T$); and that, when defined, $x\cdot y$ is a horizontal translation of $x$.

\begin{figure}
    \centering
    \includegraphics[width=.5\textwidth]{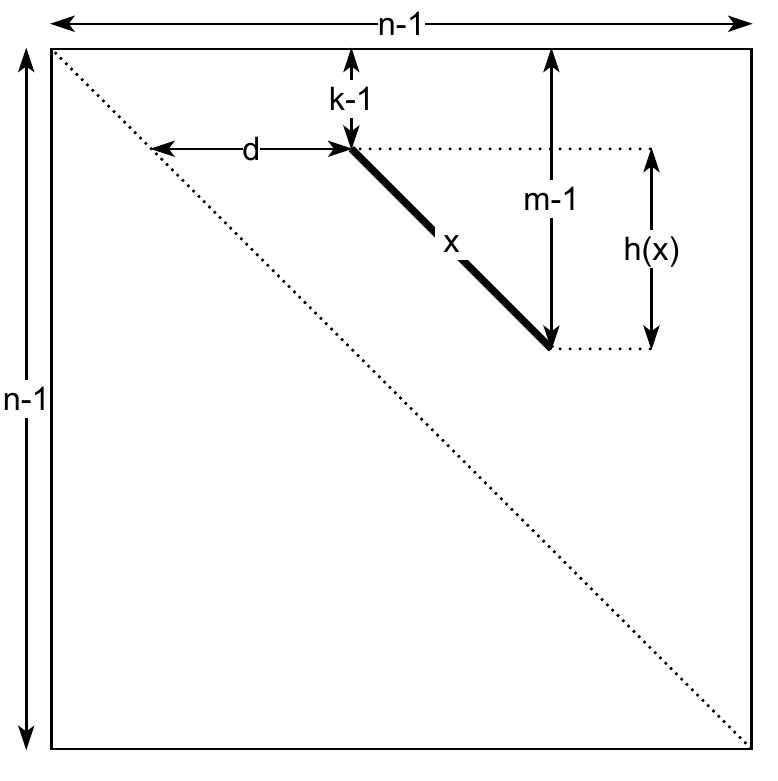}
    \caption{Any triplet $\prec k, d, m \succ$ of $\overline{M}_n\setminus \{\zero\}$ corresponds to a line segment akin to the depicted $x$, whose height is $h(x)$. The element $\zero$ corresponds to the empty square, with $h(\zero)=-1$.}
    \label{fig:square}
\end{figure}

Since (\ref{eq:m-definition}) allows $m=k$, our line segments can reduce to points within the aforementioned closed square. We use $\overline{P}_n$ to denote the set of points, viz., 
\begin{equation}
\label{eq:points}
    \overline{P}_n=\{\prec k, d, m \succ\,\in \overline{M}_n\setminus\{\zero\}:\ \  m=k\}.
\end{equation}

Let $h(x)$ denote the height of the segment $x\in \overline{M}_n$ (see Fig.~\ref{fig:square}), so that
\begin{equation}
\label{eq:v}
h(x)=
\begin{cases}
-1,\quad x=\zero,\\
m-k, \quad x=\prec k, d, m \succ\,\in \overline{M}_n\setminus\{\zero\}.
\end{cases}
\end{equation}
The height arises in a natural manner throughout; we start with some simple properties and applications.  By (\ref{eq:m-definition}) and  (\ref{eq:v}), 
\begin{equation}
\label{eq:rank-range}
    0\le h(x)\le n-\abs{d}-1\le n-1,\quad x\ne\zero,
\end{equation}
while $h$ assumes the particular values $-1$, $0$, and $n-1$ according to 
\begin{equation}
\label{eq:rank-special-cases}
    h(x)=-1\Leftrightarrow x=\zero,
        \quad h(x)=0\Leftrightarrow x\in \overline{P}_n,
        \quad h(x)=n-1\Leftrightarrow x=\one.
\end{equation}

Horizontal/vertical translations maintain the height. In other words, 
\begin{equation}
\label{eq:height-maintained}
    h(x)=h\left(\mathbf{r}(x)\right)=h\left(\mathbf{d}(x)\right).
\end{equation}
Eqns. (\ref{eq:height-maintained}) follow from (\ref{eq:productxtimesinversex}),(\ref{eq:productinversextimesx}),  and (\ref{eq:v}).

We now turn to the height of products. It follows from (\ref{eq:multiplication-finite}), (\ref{eq:doubleprime}), and (\ref{eq:v}) that $h(xy)\le \min\left(h(x),h(y)\right)$. By induction, we then get
\begin{equation}
\label{eq:rank-property}
    h(x_1x_2\ldots x_j)\le h(x_i),\mathrm{\ for\ all\ } i\in \{1,2,\ldots,j\},\quad x_i\in \overline{M}_n.
\end{equation}

\begin{remark}
\label{remark:rnk}
Ref.~\cite{fikioris-fikioris} uses 
the symbol $\mathrm{rnk}(x)$ for the rank of the partial transformation represented by $x\in M_n$. Thus in the integer case we have
\begin{equation}
\label{eq:rnk}
    \mathrm{rnk}(x)=h(x)+1,\quad x\in M_n,
\end{equation}
which 
shows why we chose the seemingly arbitrary value $h(\zero)=-1$ in (\ref{eq:v}). 
\end{remark}

If $\one=xy$, then $n-1=h(\one)\le h(x)$ by (\ref{eq:rank-special-cases}) and (\ref{eq:rank-property}), so that $x=\one$ by (\ref{eq:rank-range})  and (\ref{eq:rank-special-cases}). Similarly, $\one=xy$  implies $y=\one$. We have thus shown that 
\begin{equation}
\label{eq:sn-semigroup-2}
   xy=\one\implies x=y=\one, \quad x,y\in\overline{M}_n.
\end{equation}
Therefore $\overline{M}_n$ is actually a semigroup with a $\one$ adjoined. We will denote the inverse semigroup $\overline{M}_n\setminus\{\one\}$ by $\overline{S}_n$. 

The result that follows has no counterpart in the integer case. By means of an affine transformation $\varphi$ (easily visualized by means of Fig.~\ref{fig:square}), we demonstrate that all $\overline{M}_n$ are isomorphic: 

\begin{proposition}
\label{prop:isomorphism}
Let $n,q$ be integers $\ge 2$. The map $\varphi: \overline{M}_n\rightarrow \overline{M}_q$ given by 
\begin{equation}
    \zero\mapsto \zero, \quad x_n=\prec k_n, d_n, m_n \succ\,\mapsto\, x_q=\prec k_q, d_q, m_q \succ,
\end{equation}
where
\begin{equation}
    k_q-1=\frac{q-1}{n-1} (k_n-1),\quad
    d_q=\frac{q-1}{n-1} d_n,\quad  m_q-1=\frac{q-1}{n-1} (m_n-1),
\end{equation}
is a monoid isomorphism. Therefore any  $\overline{M}_n$  is isomorphic to $\overline{M}_2$, and any $\overline{S}_n$ is isomorphic to $\overline{S}_2$.
\end{proposition}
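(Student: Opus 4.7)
The plan is to verify the four standard conditions for $\varphi$ to be a monoid isomorphism: (i) it maps $\overline{M}_n$ into $\overline{M}_q$, (ii) it is a bijection, (iii) it preserves the product, and (iv) it sends the identity to the identity. The unifying observation is that $\lambda := (q-1)/(n-1)$ is strictly positive, so multiplication by $\lambda$ commutes with $\max$, $\min$, and addition; in particular $\min(0,\lambda d) = \lambda \min(0,d)$ and $\max(0,\lambda d) = \lambda \max(0,d)$.

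For (i), I would rewrite the constraints in (\ref{eq:m-definition}) in the centred form $-\min(0,d_n) \le k_n - 1$, $m_n - 1 \le (n-1) - \max(0,d_n)$, and $k_n \le m_n$. Multiplying each by $\lambda > 0$ and invoking the two commutation identities above produces the corresponding constraints for $\prec k_q, d_q, m_q \succ$ in $\overline{M}_q$. For (ii), the inverse of $\varphi$ is the same construction with $n$ and $q$ swapped (scale factor $1/\lambda$); both compositions are the identity since $\zero$ is fixed and the affine maps on the three coordinates compose to the identity on $\mathbb{R}$.

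For (iii), I would take nonzero $x,y \in \overline{M}_n$ with parameters $(k_n, d_n, m_n)$ and $(k_n', d_n', m_n')$ and compare $\varphi(xy)$ with $\varphi(x)\varphi(y)$ via (\ref{eq:doubleprime}). Because $\lambda > 0$, $\lambda\,\max(k_n - 1,\,(k_n' - 1) - d_n) = \max(k_q - 1,\,(k_q' - 1) - d_q)$, which rearranges to $k_q'' = \max(k_q, k_q' - d_q)$; an identical computation yields $m_q''$, while $d_q'' = d_q + d_q'$ follows from linearity. The same monotonicity of $t \mapsto 1 + \lambda(t-1)$ forces $k_n'' > m_n''$ iff $k_q'' > m_q''$, so the $\zero$ branch of (\ref{eq:multiplication-finite}) is entered on one side iff it is entered on the other. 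For (iv), direct substitution gives $\varphi(\prec 1, 0, n \succ) = \prec 1, 0, q \succ$, so $\varphi$ sends the identity of $\overline{M}_n$ to that of $\overline{M}_q$.

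The final sentence of the proposition follows by specialising $q = 2$ and using (\ref{eq:sn-semigroup-2}): since $\overline{S}_n$ is then a subsemigroup and $\varphi$ preserves identities, the restriction $\varphi|_{\overline{S}_n}$ gives a semigroup isomorphism $\overline{S}_n \to \overline{S}_2$. There is no real obstacle in the argument; everything reduces to the single fact that positive scaling commutes with $\max$ and $\min$, which is exactly what makes the definition (\ref{eq:m-definition}) and the product formula (\ref{eq:doubleprime}) transform covariantly under $\varphi$.
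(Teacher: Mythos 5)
Your proof is correct and follows essentially the same route as the paper, which simply asserts that bijectivity follows from (\ref{eq:m-definition}) and that multiplicativity and identity-preservation follow from (\ref{eq:multiplication-finite}); your write-up fills in the details the paper leaves implicit, with the key observation that positive scaling by $\lambda=(q-1)/(n-1)$ commutes with $\max$ and $\min$ doing exactly the work the paper's citation of those equations presupposes. Your handling of the final sentence via (\ref{eq:sn-semigroup-2}) also matches the paper's intent, so there is nothing to correct.
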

\begin{proof}
$\varphi$ is bijective by (\ref{eq:m-definition}). $\varphi(x_ny_n)=\varphi(x_n)\varphi(y_n)$ and $\varphi(\prec1, 0, n \succ)=\prec~1, 0, q \succ$ follow from (\ref{eq:multiplication-finite}).
\end{proof}

\begin{remark}
\label{remark:no-n}
By Proposition~\ref{prop:isomorphism}, a stand-alone study of $\overline{M}_n$ would be facilitated if one took $n=2$, corresponding to segments lying in a $1\times 1$ closed square.  However, we retain the parameter $n$ in order to draw upon and compare to results from \cite{fikioris-fikioris}. 
\end{remark}

\begin{remark}
\label{remark:h-instead-of-rnk}
Eqn. (\ref{eq:v}) and  Proposition~\ref{prop:isomorphism} imply that, for $x_n \in \overline{M}_n\setminus\{\zero\}$,
\begin{equation}
\label{eq:v-isomorphism}
    h(x_q)=\frac{q-1}{n-1} h(x_n).
\end{equation}
Eqn. (\ref{eq:v-isomorphism}) shows why, for $\overline{M}_n$, we use the height $h(x)$ instead of extending (to the non-integer case) the quantity $\mathrm{rnk}(x)=m-k+1$ mentioned in Remark~\ref{remark:rnk}: In $\overline{M}_n$, the latter quantity would scale in an unnatural manner.
\end{remark}

The idempotents of $\overline{M}_n$ were identified in Proposition~\ref{th:basic-theorem}.
The next proposition states that all other $x\in \overline{M}_n$ are nilpotents, and gives the nilpotent indexes $i(x)$. In contrast to the integer case of $M_n$ (and as expected from the aforementioned isomorphism, which leaves $i(x)$ unaltered), $i(x)$ can take on values larger than $n$.

\begin{proposition}
\label{prop:idempotent-nilpotent}
An element $x=\prec k, d, m \succ\,\in \overline{M}_n\setminus \{\zero\}$ is nilpotent if $d\ne 0$. The index $i(x)$ of the nilpotent is given by
\begin{equation}
\label{eq:index-of-nilpotent}
i(x)
=2+\left\lfloor \frac{m-k}{\abs d}
\right\rfloor=2+\left\lfloor \frac{h(x)}{\abs d}
\right\rfloor,
\end{equation}
where $\lfloor \beta \rfloor$ denotes the floor of $\beta\in\mathbb{R}$. In particular, $i(x)=2$ when $x\in \overline{P}_n$; and $i(x)\to\infty $ as $d\to 0$ (with $h(x)=m-k$ held fixed and positive).
\end{proposition}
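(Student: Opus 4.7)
The natural plan is to apply Lemma~\ref{lemma:powers} and simply determine, for $d\ne 0$, the smallest $j\in\mathbb{N}$ for which the ``zero-case'' $k^{(j)}>m^{(j)}$ of (\ref{eq:powers-1}) kicks in. Nilpotency is then automatic (because the zero case, once reached, is absorbing by~(\ref{eq:multiplication-finite})), and the index $i(x)$ is exactly the least such~$j$.

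I would split on the sign of $d$. If $d>0$, then $\min(0,d)=0$ and $\max(0,d)=d$, so (\ref{eq:powers-2}) collapses to $k^{(j)}=k$ and $m^{(j)}=m-(j-1)d$; if $d<0$, then $\min(0,d)=d$ and $\max(0,d)=0$, giving $k^{(j)}=k-(j-1)d=k+(j-1)\abs{d}$ and $m^{(j)}=m$. In either case the inequality $k^{(j)}>m^{(j)}$ is equivalent to
\begin{equation*}
(j-1)\abs{d} > m-k,
\qquad\text{i.e.,}\qquad
j > 1+\frac{m-k}{\abs{d}}.
\end{equation*}
Because $m\ge k$, the right-hand side is at least $1$, so such a positive integer $j$ exists; its smallest value is the index $i(x)$. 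This establishes nilpotency.

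It remains to identify that least integer with $2+\lfloor (m-k)/\abs{d}\rfloor$. Writing $\alpha=(m-k)/\abs{d}\ge 0$, I want the smallest integer strictly greater than $\alpha+1$. If $\alpha\notin\mathbb{Z}$, this is $\lceil\alpha+1\rceil=\lfloor\alpha\rfloor+2$; if $\alpha\in\mathbb{Z}$, it is $\alpha+2=\lfloor\alpha\rfloor+2$. Either way, $i(x)=2+\lfloor\alpha\rfloor=2+\lfloor h(x)/\abs{d}\rfloor$ by~(\ref{eq:v}), which is (\ref{eq:index-of-nilpotent}). The two special cases are immediate: for $x\in\overline{P}_n$ we have $h(x)=m-k=0$, hence $i(x)=2$; and fixing $h(x)=m-k>0$ while letting $d\to 0$ sends $h(x)/\abs{d}\to\infty$, so $i(x)\to\infty$.

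The only subtlety is the edge case where $\alpha$ is an integer, since one must use a strict rather than weak inequality when reading off the smallest valid $j$; handling that uniformly via the floor (as above) is what yields the clean closed form. Beyond that, the proof is a direct substitution into Lemma~\ref{lemma:powers}.
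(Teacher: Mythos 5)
Your proposal is correct and takes essentially the same approach as the paper: apply Lemma~\ref{lemma:powers}, split on the sign of $d$, and identify $i(x)$ as the least $j$ with $k^{(j)}>m^{(j)}$. The paper's proof is terser---it asserts the closed form without writing out the floor-function bookkeeping---whereas you make explicit the step from ``smallest integer strictly greater than $1+(m-k)/\abs{d}$'' to $2+\lfloor (m-k)/\abs{d}\rfloor$, including the integer edge case, which is exactly the detail the paper leaves to the reader.
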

\begin{proof}
When $d>0$, the $m^{(j)}$ in (\ref{eq:powers-2}) decreases linearly with $j$, while $k^{(j)}=k$ remains constant. Thus $k^{(j)}>m^{(j)}$ for large enough $j$, in which case $x^j=\zero$ by (\ref{eq:powers-1}). The index $i(x)$ is the smallest such  $j$ and is given by (\ref{eq:index-of-nilpotent}). The proof for $d<0$ is similar.
\end{proof}

\begin{remark} 
In the special case of integer parameters ($x\in M_n$) we can show that (\ref{eq:index-of-nilpotent}) reduces to formula (28) of \cite{fikioris-fikioris} (which involves the ceiling rather than the floor function). However, (28) of \cite{fikioris-fikioris} \textit{does not hold} for the more general case $x\in\overline{M}_n$.
\end{remark}

\begin{corollary}
\label{th:periodic}
$\overline{M}_n$ and $\overline{S}_n=\overline{M}_n\setminus\{\one\}$ are
periodic inverse semigroups. $\overline{M}_n$ is \textbf{not} categorical at zero, and neither is  $\overline{S}_n$. 
\end{corollary}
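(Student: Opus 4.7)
The plan is to reduce both assertions to Proposition~\ref{prop:idempotent-nilpotent}. For periodicity, recall that a semigroup is periodic iff every monogenic subsemigroup is finite, equivalently, every element has some power that is idempotent. By Proposition~\ref{th:basic-theorem}, every element with $d=0$ (together with $\zero$) is idempotent, and by Proposition~\ref{prop:idempotent-nilpotent}, every $x=\prec k,d,m\succ$ with $d\ne 0$ is nilpotent, so $x^{i(x)}=\zero$ is idempotent. Since $\one$ is trivially idempotent, every element of $\overline{M}_n$ has an idempotent power. As $\overline{S}_n=\overline{M}_n\setminus\{\one\}$ is a subsemigroup containing $\zero$, the same argument applies. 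This part is routine.

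For the failure of categoricity at zero, I recall that a semigroup with zero is categorical at zero if $ab\ne\zero$ and $bc\ne\zero$ (with $b\ne\zero$) imply $abc\ne\zero$. To refute this, I will produce a single nilpotent $x$ with $x^2\ne\zero$ but $x^3=\zero$; then taking $a=b=c=x$ supplies a counterexample in both $\overline{M}_n$ and $\overline{S}_n$ (noting that $x\ne\one$ since $x$ is nilpotent, so $x\in\overline{S}_n$). By Proposition~\ref{prop:isomorphism}, it suffices to exhibit such an element in $\overline{M}_2$.

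The key step is the choice of $x$. I would take $x=\prec 1,\tfrac{1}{2},\tfrac{3}{2}\succ\in\overline{M}_2$; equivalently, by Proposition~\ref{prop:idempotent-nilpotent}, any nonzero element with $0<|d|<h(x)$, so that $i(x)=2+\lfloor h(x)/|d|\rfloor\ge 3$. Verification proceeds directly from (\ref{eq:multiplication-finite})--(\ref{eq:doubleprime}) or (\ref{eq:powers-1})--(\ref{eq:powers-2}): $x^2\ne\zero$ and $x^3=\zero$. Hence $x\cdot x\ne\zero$, $x\cdot x\ne\zero$, but $x\cdot x\cdot x=\zero$, showing neither $\overline{M}_n$ nor $\overline{S}_n$ is categorical at zero.

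The main obstacle is essentially cosmetic, namely producing the explicit nilpotent; the existence of nilpotents of index $\ge 3$ is already guaranteed by Proposition~\ref{prop:idempotent-nilpotent}, since letting $|d|\to 0$ with $h(x)$ fixed and positive makes $i(x)\to\infty$. Graphically (Fig.~\ref{fig:square}), one can picture $x$ as a sufficiently short, slightly tilted segment whose successive translations by $(d,d)$ march it out of the square after three steps but not after two.
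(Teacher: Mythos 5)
Your proposal is correct and takes essentially the same route as the paper: periodicity follows because every element is idempotent or nilpotent (Propositions~\ref{th:basic-theorem} and~\ref{prop:idempotent-nilpotent}), and categoricity at zero fails because there exist nilpotents of index $3$, which the paper simply asserts via (\ref{eq:index-of-nilpotent}) while you make a witness explicit. One cosmetic slip: your criterion $0<\abs{d}<h(x)$ guarantees $i(x)\ge 3$, but your chosen element $\prec 1,\tfrac12,\tfrac32\succ$ actually has $\abs{d}=h(x)=\tfrac12$, which still yields $i(x)=3$ by (\ref{eq:index-of-nilpotent}), so the verification goes through unchanged.
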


\begin{proof}
A semigroup is \textit{periodic} when all its elements are of finite order, i.e., when the monogenic subsemigroup generated by any semigroup element has finite cardinality \cite{Howie}. As all $x\in \overline{M}_n$ are idempotent and/or nilpotent, both $\overline{M}_n$ and $\overline{S}_n$ are periodic.  

By definition \cite{Lawson, bulman}, a semigroup with zero is categorical at zero if $xyz=\zero$ implies $xy=\zero$ or $yz=\zero$. This is not true of $\overline{M}_n$ or $\overline{S}_n$, because there are nilpotents $x$ of index $i(x)=3$.
\end{proof}

\section{Further results}
\label{section:additional}

In this section, we determine Green's relations and show that $\overline{M}_n$ is a strongly $E^*$-unitary, combinatorial, fundamental, and completely semisimple inverse monoid. Then, we explicitly determine all ideals of $\overline{M}_n$, discuss issues pertaining to $j$th roots, and point out differences between $\overline{M}_n$ and $M_n$. Finally, we show that $\overline{M}_n$ is a supersemigroup of the Brandt semigroup, and prove that $\overline{M}_n$ has infinite Sierpi\'nski rank. 

We have seen (Corollary~\ref{corollary:e*unitary}) that $\overline{M}_n$ is a $E^*$-unitary inverse semigroup. We now demonstrate that $\overline{M}_n$ belongs to the narrower class of strongly $E^*$-unitary inverse semigroups \cite{bulman,lawson-e*,mcalister}. In what follows, $G$ denotes the (multiplicative) circle group, by which we mean the complex numbers on the unit circle, 
\begin{equation}
    G=\{z\in\mathbb{C}: \abs{z}=1\}=\{e^{i\theta}\in\mathbb{C}: -\pi<\theta\le\pi\}.
\end{equation}
The unique idempotent of $G$ is $1=e^{i0}$. $G^0=G\cup \{0\}$ is a group with zero.
\begin{theorem}
$\overline{M}_n$ is a strongly $E^*$-unitary inverse semigroup. 
\end{theorem}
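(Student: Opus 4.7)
The plan is to exhibit a 0-restricted, idempotent-pure prehomomorphism from $\overline{M}_n$ into a group with zero, which is one of the standard characterizations of strongly $E^*$-unitary inverse semigroups \cite{bulman,lawson-e*,mcalister}. The multiplication rule (\ref{eq:doubleprime}) tells us that the $d$-coordinate is the only one that behaves additively under nonzero multiplication, so the natural grading sends $x$ to a function of $d$ alone.

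First I would pick a positive real $\alpha$ with $\alpha(n-1)<2\pi$---for concreteness $\alpha=\pi/(n-1)$---and define $\theta:\overline{M}_n\to G^0$ by
\[
\theta(\zero)=0,\qquad \theta(\prec k,d,m\succ)=e^{i\alpha d}.
\]
(Alternatively one can invoke Proposition~\ref{prop:isomorphism} to reduce to $n=2$ and simply take $\alpha=1$; the general $n$ version is no harder to handle.)

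The verification then splits into three short steps. Step one, $\theta$ is a prehomomorphism into $G^0$: if $xy\ne\zero$ then (\ref{eq:doubleprime}) gives $d''=d+d'$, so $\theta(xy)=e^{i\alpha(d+d')}=\theta(x)\theta(y)$; and if $xy=\zero$ then $\theta(xy)=0$, which lies below every element of $G^0$ in the natural partial order (on a group with zero, $0$ is the minimum and the nonzero elements are pairwise incomparable). Step two, $\theta$ is 0-restricted, by construction. Step three, $\theta$ is idempotent-pure: if $\theta(x)=1$ with $x\ne\zero$, then $\alpha d\in 2\pi\mathbb{Z}$, but by (\ref{eq:restrictions-further}) $|\alpha d|\le \alpha(n-1)<2\pi$, forcing $d=0$, so $x\in\mathbf{E}(\overline{M}_n)$ by Proposition~\ref{th:basic-theorem}.

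The only genuine subtlety---and really the sole reason the circle group is used in place of $(\mathbb{R},+)$ being secretly isomorphic to it here---is the calibration of $\alpha$: since $d$ now ranges continuously over the real interval $[-(n-1),n-1]$, the scaling must be kept strictly below $2\pi/(n-1)$ so that no nonzero value of $d$ gets killed by the exponential. Once this is in place, both the homomorphism identity and idempotent-purity fall out immediately from the additivity $d''=d+d'$ and from the characterization of nonzero idempotents as those with $d=0$.
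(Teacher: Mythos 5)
Your proposal is correct and is essentially the paper's own proof: the paper constructs the same map $x=\,\prec k,d,m\succ\,\mapsto \exp\bigl(id/(n-1)\bigr)$, $\zero\mapsto 0$, into the circle group with zero, verifies multiplicativity on nonzero products via $d''=d+d'$ together with (\ref{eq:condition-for-nonzero}), and gets idempotent-purity exactly as you do, because $\lvert d\rvert\le n-1$ keeps the exponent away from $2\pi\mathbb{Z}\setminus\{0\}$. Your choice $\alpha=\pi/(n-1)$ versus the paper's $1/(n-1)$ is an immaterial recalibration, and your correct observation that the calibration is needed only for idempotent-purity (the 0-morphism identity holds for any $\alpha$) slightly sharpens the paper's remark about the image lying on an arc.
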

    \begin{proof}
        By definition---and as explained in \cite{bulman,mcalister,lawson-e*}---an inverse semigroup is strongly $E^*$-unitary if there exists an idempotent-pure $0$-morphism $\varphi$ from the semigroup into a group with zero. The proof that follows is constructive. 
        
        By (\ref{eq:restrictions-further}), the map $\varphi: \overline{M}_n\rightarrow G^0$ given by 
\begin{equation*}
\label{eq:map-for-strongly}
    \zero\mapsto 0, \quad x=\prec k, d, m \succ\,\mapsto \exp\left({\frac{id}{n-1}}\right),
\end{equation*}
maps $\overline{M}_n\setminus\{\zero\}$ onto a portion of $G$, namely the arc $\{e^{i\theta}\in\mathbb{C}: \abs{\theta}\le 1\}$. $\varphi$~is idempotent-pure because only idempotents ($d=0$) map to $1$, and $0$-restricted because only $\zero$ maps to the origin $0$. Now suppose that $x,y\in \overline{M}_n$ with $xy\ne 0$, and set $x=\prec k, d, m \succ$ and $y=\prec k', d', m' \succ$. (\ref{eq:condition-for-nonzero}) gives $\abs{d+d'}\le n-1$, so that the image of the nonzero product $xy$ belongs to the aforementioned arc. Furthermore,
\begin{equation*}
    \varphi(xy)=\exp\left[{\frac{i(d+d')}{n-1}}\right]=\exp\left({\frac{id}{n-1}}\right)\exp\left({\frac{id'}{n-1}}\right)=\varphi(x)\varphi(y),
\end{equation*}
meaning that $\varphi$ is a $0$-morphism.
    \end{proof}

The theorem below gives Green's relations on $\overline{M}_n$, which turn out to be very similar to those in $M_n$ (see Theorem~12 of \cite{fikioris-fikioris}, but take into account Remark~\ref{remark:h-instead-of-rnk}). Our derivations of  $\mathcal{R}$, $\mathcal{L}$, $\mathcal{H}$, $\mathcal{D}$ use $\mathbf{r}$ and $\mathbf{d}$---and stress graphical interpretations. Our theorem further shows that $\mathcal{J}=\mathcal{D}$; while this is also true in $M_n$, it requires a different proof because $\overline{M}_n$ is not finite.  
\edit{
\begin{theorem}
\label{th:green}
    In the inverse monoid $\overline{M}_n$, Green's relations for any two nonzero elements $x=\prec k, d, m \succ$ and $y=\prec k', d', m' \succ$ are as follows.
    \begin{equation}
    \label{eq:green-R}
        x\mathcal{R}y
        \iff  
        \mathbf{r}(x)=\mathbf{r}(y)\iff
        k=k'\ \mathrm{and}\ m=m',
    \end{equation}
    \begin{equation}
    \label{eq:green-L}
       x\mathcal{L}y\iff 
       \mathbf{d}(x)=\mathbf{d}(y)\iff k+d=k'+d'\ \mathrm{and}\ m+d=m'+d'.
    \end{equation}
For $x,y\in \overline{M}_n$,
    \begin{equation}
    \label{eq:green-H}
        x\mathcal{H}y
        \iff\left(
        x\mathcal{R}y\ 
        \mathrm{and}\ 
        x\mathcal{L}y \right)
        \iff\ x=y,
    \end{equation}
    \begin{equation}
    \label{eq:green-D}
        x\mathcal{D}y\iff \left(
        \exists z\in \overline{M}_n: \mathbf{d}(z)=\mathbf{d}(x)\ \mathrm{and} \ \mathbf{r}(z)=\mathbf{r}(y) \right)
        \iff h(x)=h(y),
    \end{equation}
    \begin{equation}
    \label{eq:green-J}
        x\mathcal{J}y\iff h(x)=h(y).
    \end{equation}
    In all cases, $\zero$ forms a class of its own,
    \begin{equation}
    \label{eq:green-zero}
R_\zero=L_\zero=H_\zero=D_\zero=J_\zero=\{\zero\}.
    \end{equation}
\end{theorem}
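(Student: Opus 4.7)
The plan is to build everything on the standard inverse-semigroup identities $x\mathcal{R}y\iff \mathbf{r}(x)=\mathbf{r}(y)$ and $x\mathcal{L}y\iff \mathbf{d}(x)=\mathbf{d}(y)$, combined with the explicit triplet expressions for $\mathbf{r}$ and $\mathbf{d}$ in (\ref{eq:productxtimesinversex})--(\ref{eq:productinversextimesx}). Reading off $\mathbf{r}(x)=\prec k,0,m\succ$ yields the parameter form of $\mathcal{R}$ in (\ref{eq:green-R}) by direct comparison, and similarly $\mathbf{d}(x)=\prec k+d,0,m+d\succ$ yields (\ref{eq:green-L}). Since $\mathcal{H}=\mathcal{R}\cap\mathcal{L}$, combining the two parameter conditions gives $k=k'$, $m=m'$, and $d=d'$, hence $x=y$; this proves (\ref{eq:green-H}).

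For $\mathcal{D}$ I would invoke the standard decomposition $\mathcal{D}=\mathcal{L}\circ\mathcal{R}$: $x\mathcal{D}y$ iff there exists $z$ with $x\mathcal{L}z$ and $z\mathcal{R}y$, which by the above amounts to $\mathbf{d}(z)=\mathbf{d}(x)$ and $\mathbf{r}(z)=\mathbf{r}(y)$---the middle clause of (\ref{eq:green-D}). The forward direction $x\mathcal{D}y\Rightarrow h(x)=h(y)$ is immediate since both $\mathcal{R}$ and $\mathcal{L}$ preserve $m-k$ (directly in the first case, after cancellation of $d$ in the second). For the converse, given $h(x)=h(y)$, I would exhibit $z$ explicitly by setting $k_z=k'$, $m_z=m'$, and $d_z=k+d-k'$; a short check gives $\mathbf{r}(z)=\prec k',0,m'\succ=\mathbf{r}(y)$, while $\mathbf{d}(z)=\mathbf{d}(x)$ reduces to $m'-k'=m-k$, precisely $h(x)=h(y)$. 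I expect the \emph{main obstacle} to be verifying that this candidate $z$ actually lies in $\overline{M}_n$, i.e., satisfies the inequalities of (\ref{eq:m-definition}): the nontrivial cases ($d_z<0$ on the left bound, $d_z>0$ on the right) come down to $k+d\ge 1$ and $m+d\le n$, both of which are guaranteed by $x\in\overline{M}_n$ via (\ref{eq:restrictions-further}).

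The equality $\mathcal{J}=\mathcal{D}$ requires a different argument than in the finite monoid $M_n$, where it followed from general principles for finite semigroups. The inclusion $\mathcal{D}\subseteq\mathcal{J}$ is automatic, so one direction of (\ref{eq:green-J}) follows from (\ref{eq:green-D}). For the reverse, if $x\mathcal{J}y$ then $x=a y b$ and $y=c x e$ for suitable $a,b,c,e\in\overline{M}_n$ (using $\one$ when needed); applying the height inequality (\ref{eq:rank-property}) to each equation yields $h(x)\le h(y)$ and $h(y)\le h(x)$, forcing $h(x)=h(y)$. Finally, (\ref{eq:green-zero}) is immediate: $\mathbf{r}(\zero)=\zero$ and $\mathbf{d}(\zero)=\zero$ force any element $\mathcal{R}$- or $\mathcal{L}$-related to $\zero$ to be $\zero$ itself, and since $\mathcal{H}\subseteq\mathcal{R}$ and $\mathcal{D},\mathcal{J}$ are characterized by $h$ (with $h(\zero)=-1$ achieved only by $\zero$), the remaining three classes of $\zero$ collapse to $\{\zero\}$ as well.
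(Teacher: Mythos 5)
Your proposal is correct, and for $\mathcal{R}$, $\mathcal{L}$, $\mathcal{H}$, and the first two equivalences of $\mathcal{D}$ it coincides with the paper's proof: both rest on the standard facts $x\mathcal{R}y\iff\mathbf{r}(x)=\mathbf{r}(y)$, $x\mathcal{L}y\iff\mathbf{d}(x)=\mathbf{d}(y)$, $\mathcal{D}=\mathcal{L}\circ\mathcal{R}$, read off through (\ref{eq:productxtimesinversex})--(\ref{eq:productinversextimesx}). Two points of genuine divergence. First, for the converse of (\ref{eq:green-D}) the paper waves at the picture (``$z$ is concurrently a vertical translation of $x$ and a horizontal translation of $y$'') and cites \cite{fikioris-fikioris} for the explicit $z$; you instead construct $z=\prec k',\,k+d-k',\,m'\succ$ and verify both $\mathbf{r}(z)=\mathbf{r}(y)$ and $\mathbf{d}(z)=\mathbf{d}(x)$ together with membership in $\overline{M}_n$, which makes the argument self-contained---and you correctly identify membership as the only nontrivial check. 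Second, and more substantively, your proof of $\mathcal{J}=\mathcal{D}$ is different from both of the paper's arguments: the paper deduces it either from periodicity (Corollary~\ref{th:periodic} plus a theorem of \cite{Howie}) or from complete semisimplicity (Corollary 3.19 of \cite{Lawson} plus the condition (\ref{eq:completelysemisimple}) checked via the natural partial order), whereas you argue directly: $x=ayb$ and $y=cxe$ give $h(x)\le h(y)\le h(x)$ by the height inequality (\ref{eq:rank-property}), and $\mathcal{D}\subseteq\mathcal{J}$ is automatic. Your route is more elementary, needs no citation of general structure theory, and is in fact the same mechanism the paper itself deploys later (Lemma~\ref{lemma:yequalszxw-1} and Theorem~\ref{th:principal-ideals}, where $h(y)\le h(x)\iff y\in\overline{M}_n x\overline{M}_n$); what the paper's versions buy instead is the structural information (periodicity, complete semisimplicity) that it wants on record anyway for the ensuing corollary.

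One small citation slip: the inequalities $k+d\ge 1$ and $m+d\le n$ that you need for the membership check do \emph{not} follow from (\ref{eq:restrictions-further}) alone (take $k=1$, $d=-1$, which satisfies (\ref{eq:restrictions-further}) but has $k+d=0$). They follow from the defining inequalities (\ref{eq:m-definition}): $k\ge 1-\min(0,d)$ yields $k+d\ge 1$ in all cases, and $m\le n-\max(0,d)$ yields $m+d\le n$; equivalently, they are exactly the statement $\mathbf{d}(x)\in\overline{M}_n$, noted in the paper right after (\ref{eq:transpose}). With that reference corrected, the proof is complete.
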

}
\begin{proof}
In (\ref{eq:green-R})--(\ref{eq:green-D}), the first equivalences---which express Green's relations in terms of  $\mathbf{r}$ and $\mathbf{d}$---are standard results which hold for all inverse semigroups \cite{Lawson,lawson2023introduction}. The second equivalences in (\ref{eq:green-R}) and (\ref{eq:green-L}), as well as the special cases $R_\zero=L_\zero=\{\zero\}$, then follow from (\ref{eq:productxtimesinversex}) and (\ref{eq:productinversextimesx}).  The second equivalence in (\ref{eq:green-H})  is an immediate consequence of (\ref{eq:green-R}), (\ref{eq:green-L}), and $R_\zero=L_\zero=\{\zero\}$. 

We now turn to $\mathcal{D}$. By the discussions in Section~\ref{section:graphical}, the segment $z$ in (\ref{eq:green-D}) is, concurrently, a vertical translation of $x$ and a horizontal translation of $y$; and it is graphically apparent---see especially (\ref{eq:height-maintained})---that such a $z$ exists iff $h(x)=h(y)$. (The paper \cite{fikioris-fikioris} contains an explicit expression for $z$, which remains valid for  $\overline{M}_n$.) 

(\ref{eq:green-J}) is tantamount to $\mathcal{J}=\mathcal{D}$, which we show in two ways: Firstly, it holds by virtue of Corollary~\ref{th:periodic}, 
because $\mathcal{J}=\mathcal{D}$ in any semigroup that is periodic \cite{Howie}. Secondly, we know (Corollary 3.19 of \cite{Lawson}) that $\mathcal{J}=\mathcal{D}$ in any inverse semigroup satisfying
\begin{equation}
\label{eq:completelysemisimple}
    x\mathcal{D} y \ \mathrm{and}\ x\le y\implies x=y.
\end{equation}
By (\ref{eq:green-D}), we must show 
\begin{equation*}
    h(x)=h(y) \ \mathrm{and}\ x\le y\implies x=y.
\end{equation*}
which is apparent graphically, or can be proved using (\ref{eq:natural-partial-order}) and (\ref{eq:v}). 
\end{proof}

\begin{corollary}
    $\overline{M}_n$ is a completely semisimple, combinatorial, and fundamental inverse monoid. 
\end{corollary}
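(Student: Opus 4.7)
The plan is to extract all three properties directly from Theorem~\ref{th:green} and its proof, without any further calculation.

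For \textbf{combinatorial}, I would simply invoke the second equivalence in (\ref{eq:green-H}): since $x\mathcal{H}y\iff x=y$, every $\mathcal{H}$-class is a singleton, which is exactly the definition of a combinatorial (aperiodic) inverse semigroup.

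For \textbf{fundamental}, I would appeal to the standard fact (see, e.g., \cite{Lawson,lawson2023introduction}) that an inverse semigroup is fundamental iff the maximum idempotent-separating congruence $\mu$ is trivial, and that $\mu$ is always contained in $\mathcal{H}$. Since $\mathcal{H}$ is the identity relation by combinatoriality, so is $\mu$, hence $\overline{M}_n$ is fundamental. Alternatively, one could note that every combinatorial inverse semigroup is fundamental, and invoke this directly.

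For \textbf{completely semisimple}, the condition (\ref{eq:completelysemisimple}), namely $x\mathcal{D}y$ and $x\le y\implies x=y$, was already verified in the proof of Theorem~\ref{th:green} (the last display, using (\ref{eq:green-D}), (\ref{eq:natural-partial-order}), and (\ref{eq:v})). This is precisely the defining property of a completely semisimple inverse semigroup.

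There is no real obstacle here: the entire corollary is a packaging of consequences of Theorem~\ref{th:green}, and the proof is a three-sentence reference. The only subtlety worth a brief remark is that ``fundamental'' can be defined in slightly different ways in the literature; I would make sure to state the definition being used (triviality of the maximum idempotent-separating congruence) so that the deduction from combinatoriality is unambiguous.
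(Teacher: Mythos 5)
Your proposal is correct and follows essentially the same route as the paper: combinatoriality from the triviality of $\mathcal{H}$ in (\ref{eq:green-H}), fundamentality from the standard implication that combinatorial inverse semigroups are fundamental (your $\mu\subseteq\mathcal{H}$ argument is just the proof of that implication spelled out), and complete semisimplicity from condition (\ref{eq:completelysemisimple}), already verified in the proof of Theorem~\ref{th:green}. No gaps.
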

\begin{proof}
    By definition \cite{Lawson}, an inverse semigroup is completely semisimple when (\ref{eq:completelysemisimple}) is satisfied. As $\mathcal{H}$ is the equality relation, $\overline{M}_n$ is a combinatorial semigroup \cite{Lawson}. Finally, all combinatorial inverse semigroups are fundamental \cite{Lawson}.
\end{proof}

 Since $\overline{M}_n$ is fundamental, it is a full inverse submonoid of the Munn monoid on $\mathbf{E}\left(\overline{M}_n\right)$ \cite{Howie,Lawson,lawson2023introduction}. By Corollary~\ref{corollary:semilattice}, any two of the principal ideals of $\mathbf{E}\left(\overline{M}_n\right)$ are isomorphic. In other words \cite{Howie}, $\mathbf{E}\left(\overline{M}_n\right)$ is a uniform semilattice.
 
Before proceeding, we develop two lemmas involving the height function. They will help us obtain the principal ideals
of $\overline{M}_n$, and discuss the subsemigroup $\{\zero\}\cup\overline{P}_n$. 
\begin{lemma} 
\label{lemma:yequalszxw-1}
Let $x,y\in\overline{M}_n$. Then $h(y)\le h(x)$ iff there exist $z,w\in\overline{M}_n$ such that
\begin{equation}
\label{eq:yeqzxw}
    y=zxw. 
\end{equation}
Furthermore, if $0\le h(y)\le h(x)$ with $x=\prec k, d, m \succ$ and $y=\prec k', d', m' \succ$, then (\ref{eq:yeqzxw}) is satisfied by the nonzero elements $z=\prec k_z, d_z, m_z \succ$ and $w=\prec k_w, d_w, m_w \succ$ where
\begin{equation}
\label{eq:zdefinition}
    k_z=k',\quad d_z=k-k',\quad  m_z=m';
    \end{equation}
\begin{equation}
\label{eq:wdefinition}
    k_w=k+d,\quad d_w=k'+d'-k-d,\quad  m_w=k+d+m'-k'.
\end{equation}
\end{lemma}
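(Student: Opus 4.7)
The plan is to prove the equivalence in two directions and then verify the explicit witnesses. The forward direction---the existence of $z,w$ forces $h(y)\le h(x)$---is immediate from (\ref{eq:rank-property}) applied to the three-term product $y=zxw$, so the substantive work lies in the converse.

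For the converse I would first dispatch the case $y=\zero$ by choosing $z=w=\zero$. When $y\ne\zero$ we have $h(y)\ge 0$, which forces $h(x)\ge 0$ and hence $x\ne\zero$, so we are in the regime of the ``Furthermore'' clause. I would then take $z$ and $w$ exactly as in (\ref{eq:zdefinition})--(\ref{eq:wdefinition}) and carry out two steps.

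Step one is to show $z,w\in\overline{M}_n$, i.e., to verify the defining inequalities $1-\min(0,d_*)\le k_*\le m_*\le n-\max(0,d_*)$ for the two triples. I would do a short case analysis on the signs of $d_z=k-k'$ and $d_w=k'+d'-k-d$; each resulting inequality reduces either to a defining constraint for $x$ or for $y$ (such as $k+d\ge 1$ or $m'+d'\le n$, which follow from $x,y\in\overline{M}_n\setminus\{\zero\}$ via (\ref{eq:m-definition}) and (\ref{eq:restrictions-further})), or to the consequence $k+m'-k'\le m$ of the hypothesis $h(y)\le h(x)$. The latter is the only place in this step where $h(y)\le h(x)$ itself is invoked (once for $m_z$ and, in one subcase, for $m_w$).

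Step two is to evaluate $zxw$ via two applications of (\ref{eq:multiplication-finite})--(\ref{eq:doubleprime}). Computing $xw$ first, the choice $k_w=k+d$ makes $\max(k,k_w-d)=\max(k,k)=k$, and $h(y)\le h(x)$ collapses $\min(m,m_w-d)$ to $k+m'-k'$; the $d$-coordinate becomes $d+d_w=k'+d'-k$. Multiplying by $z$ on the left then yields $\max(k',k')=k'$ and $\min(m',m')=m'$, while the $d$-coordinates telescope to $(k-k')+(k'+d'-k)=d'$. Hence $zxw=\prec k',d',m'\succ=y$. I expect the main obstacle to be the sign-based case analysis in step one; the subsequent product computation is mechanical, and it is reassuring that $h(y)\le h(x)$ is used in precisely the same place---the bound $m'-k'\le m-k$---in both steps.
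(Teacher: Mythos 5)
Your proposal is correct and takes essentially the same route as the paper's proof: the forward direction via (\ref{eq:rank-property}), reduction to nonzero $x,y$, verification that the witnesses (\ref{eq:zdefinition})--(\ref{eq:wdefinition}) satisfy the membership inequalities of (\ref{eq:m-definition}) using only the defining constraints on $x$, $y$ and the consequence $k+m'-k'\le m$ of $h(y)\le h(x)$, and a direct evaluation of $zxw$ via (\ref{eq:multiplication-finite})--(\ref{eq:doubleprime}). The paper simply leaves the sign-based case analysis and the telescoping product computation implicit, whereas you spell them out.
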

\begin{proof}
If (\ref{eq:yeqzxw}) holds, then $h(y)\le h(x)$ by (\ref{eq:rank-property}). Conversely, suppose that $h(y)\le h(x)$. If $x=\zero$ or $y=\zero$, (\ref{eq:yeqzxw}) is trivial. We thus take $x,y\in \overline{M}_n\setminus\{\zero\}$; call $x=\prec k, d, m \succ$, $y=\prec k', d', m' \succ$; and define $z$, $w$ by (\ref{eq:zdefinition}), (\ref{eq:wdefinition}).
 By (\ref{eq:v}), the assumption $h(y)\le h(x)$ amounts to
\begin{equation}
\label{eq:vylessthanvx}
m'-k'\le m-k.
\end{equation}
Write the conditions in (\ref{eq:m-definition}) for $d$, $k$, $m$, and again for $d'$, $k'$, $m'$. Upon invoking (\ref{eq:zdefinition})--(\ref{eq:vylessthanvx}), we can easily deduce identical conditions for $d_z$, $k_z$, $m_z$ and for $d_w$, $k_w$, $m_w$. Thus $z$ and $w$ are well-defined elements of $\overline{M}_n\setminus\{\zero\}$. Finally, a quick calculation based on the multiplication formula (\ref{eq:multiplication-finite}) verifies (\ref{eq:yeqzxw}).
\end{proof}
\begin{lemma}
\label{lemma:yequalszxw-2}
    Let $y\in \{\zero\}\cup\overline{P}_n$. Let $x\in\overline{M}_n\setminus\{\zero\}$. Then there exist $z,w\in \{\zero\}\cup\overline{P}_n$ such that $y=zxw$.
\end{lemma}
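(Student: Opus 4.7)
The plan is to reduce everything to Lemma~\ref{lemma:yequalszxw-1} by exploiting the explicit formulas (\ref{eq:zdefinition})--(\ref{eq:wdefinition}) and observing that the ``point'' condition $m = k$ is automatically inherited by $z$ and $w$ when $y$ is a point.

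First I would dispose of the trivial case $y = \zero$ by taking $z = w = \zero$, which lies in $\{\zero\} \cup \overline{P}_n$ and clearly gives $zxw = \zero = y$ since $\zero$ is an absorbing element.

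Next, assume $y \in \overline{P}_n$ and write $y = \prec k', d', m' \succ$ with $m' = k'$, so that $h(y) = 0$ by (\ref{eq:v}). Since $x \neq \zero$, the range (\ref{eq:rank-range}) gives $h(x) \ge 0 = h(y)$, so the hypothesis $0 \le h(y) \le h(x)$ of Lemma~\ref{lemma:yequalszxw-1} holds. That lemma produces explicit $z, w \in \overline{M}_n \setminus \{\zero\}$ satisfying $y = zxw$, via the formulas (\ref{eq:zdefinition}) and (\ref{eq:wdefinition}).

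The only remaining step is to verify that these particular $z$ and $w$ land in $\overline{P}_n$, i.e., that $m_z = k_z$ and $m_w = k_w$. Substituting $m' = k'$ into (\ref{eq:zdefinition}) gives $m_z = m' = k' = k_z$, so $z \in \overline{P}_n$. Likewise, (\ref{eq:wdefinition}) with $m' = k'$ yields $m_w = k + d + m' - k' = k + d = k_w$, so $w \in \overline{P}_n$. This closes the proof. There is no real obstacle here: the content of the argument was already done in Lemma~\ref{lemma:yequalszxw-1}, and the present lemma is essentially the observation that when $y$ is a height-zero segment, the canonical witnesses $z$ and $w$ inherit that property from their defining formulas.
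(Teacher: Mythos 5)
Your proof is correct and follows the paper's own argument essentially verbatim: dispose of $y=\zero$ trivially, note $0=h(y)\le h(x)$ so Lemma~\ref{lemma:yequalszxw-1} applies, and check from (\ref{eq:zdefinition})--(\ref{eq:wdefinition}) with $m'=k'$ that $k_z=m_z$ and $k_w=m_w$, so $z,w\in\overline{P}_n$. The only (cosmetic) difference is that you cite (\ref{eq:rank-range}) for $h(x)\ge 0$, which is arguably the cleaner reference than the paper's citation of (\ref{eq:rank-property}).
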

\begin{proof}
If $y=\zero$, the statement is trivial. Otherwise $y\in\overline{P}_n$, so $0=h(y)\le h(x)$ by  (\ref{eq:rank-special-cases}) and (\ref{eq:rank-property}). Thus (\ref{eq:yeqzxw}) holds, where $z,w\in \overline{M}_n$ are given by (\ref{eq:zdefinition}) and (\ref{eq:wdefinition}) with $m'=k'$. It follows that $k_z=m_z$ and $k_w=m_w$, so that $z,w\in \overline{P}_n$. 
\end{proof}

The result (Theorem~\ref{th:green}) that $J_x$ consists of all segments of height $h(x)$ means that elements whose heights are equal generate the same principal ideal. We now go beyond this observation and explicitly describe all ideals, whether principal or not. It will be seen that, as opposed to $M_n$ (see Theorem~13 of \cite{fikioris-fikioris}), $\overline{M}_n$ has (two-sided) ideals that are not principal. In the theorem that follows, these non-principal ideals are denoted by $K_\mu$.

\begin{theorem}
\label{th:principal-ideals} 
The principal ideals of $\overline{M}_n$ are precisely the following sets ${I}_\mu$, 
\begin{equation}
\label{eq:principal-ideals-definition}
    {I}_\mu=\{y\in \overline{M}_n: h(y)\le \mu\},\quad \mu\in \{-1\}\cup[0,n-1].
\end{equation}
In particular, 
\begin{equation}
\label{eq:ideals-special-cases}
    {I}_{-1}=\{\zero\},\quad {I}_{0}=\{\zero\}\cup \overline{P}_n,\quad {I}_{n-1}=\overline{M}_n.
\end{equation}
The ${I}_\mu$ defined in (\ref{eq:principal-ideals-definition}) are also given by
\begin{equation}
\label{eq:principal-ideals-given-by}
    {I}_\mu=\overline{M}_n x\overline{M}_n,
\end{equation}
in which $x$ is any element of $\overline{M}_n$ with $h(x)= \mu$. 

The non-principal ideals of $\overline{M}_n$ are precisely the following sets
${K}_\mu$,
\begin{equation}
\label{eq:ideals-definition}
    {K}_\mu=\{y\in \overline{M}_n: h(y)< \mu\},\quad \mu\in (0,n-1].
\end{equation}

It follows that the collections $\{{I}_\mu\}$ and  $\{{K}_\mu\}$ are both strictly totally ordered; that is,
${I}_\mu\subset {I}_\xi$ and ${K}_\mu\subset {K}_\xi$ whenever $\mu<\xi$.
\end{theorem}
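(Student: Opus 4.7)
My plan is to extract the whole theorem from a single master computation: for any $x\in\overline{M}_n$,
\[
    \overline{M}_n x \overline{M}_n \;=\; \{y\in\overline{M}_n:h(y)\le h(x)\} \;=\; I_{h(x)}.
\]
This identity is immediate from Lemma~\ref{lemma:yequalszxw-1}: the $\subseteq$ direction is (\ref{eq:rank-property}), and the $\supseteq$ direction is the existence of $z,w$ in the lemma. Since $\overline{M}_n$ is a monoid, the principal (two-sided) ideal generated by $x$ is precisely $\overline{M}_n x \overline{M}_n$, so this proves (\ref{eq:principal-ideals-given-by}). To see that every $I_\mu$ in (\ref{eq:principal-ideals-definition}) is actually realized, I note that the height function takes every value in $\{-1\}\cup[0,n-1]$: use $\zero$ for $\mu=-1$ and $\prec 1,0,1+\mu\succ$ for $\mu\in[0,n-1]$. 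The special cases (\ref{eq:ideals-special-cases}) then follow from (\ref{eq:rank-special-cases}).

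For the non-principal ideals, I would take an arbitrary nonempty ideal $J$ and set $\mu:=\sup\{h(y):y\in J\}$, which lies in $\{-1\}\cup[0,n-1]$ by (\ref{eq:rank-range}). Either the supremum is attained by some $x\in J$, in which case $I_\mu=\overline{M}_n x\overline{M}_n\subseteq J$ while $J\subseteq I_\mu$ by definition of $\mu$, giving $J=I_\mu$ (principal); or the supremum is not attained. In the non-attained case, $J\ne\{\zero\}$ forces $\mu>0$ (otherwise $J\subseteq\{\zero\}$ and the sup is attained at $-1$). Then $J\subseteq K_\mu$ trivially, and conversely for any $y\in K_\mu$ I pick $x\in J$ with $h(y)<h(x)<\mu$ (possible precisely because the sup is not attained), so that $y\in I_{h(x)}=\overline{M}_n x\overline{M}_n\subseteq J$. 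Hence $J=K_\mu$. This simultaneously shows that every ideal has one of the two forms and that the $K_\mu$ are indeed ideals (alternatively, closure of $K_\mu$ under two-sided multiplication is a direct application of (\ref{eq:rank-property})).

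It remains to show that $K_\mu$ is not principal for $\mu\in(0,n-1]$. If $K_\mu=\overline{M}_n x\overline{M}_n=I_{h(x)}$, then $x\in K_\mu$ gives $h(x)<\mu$, but the density of heights produces a $y\in\overline{M}_n$ with $h(x)<h(y)<\mu$ (take $y=\prec 1,0,1+r\succ$ for any $r\in(\max(h(x),0),\mu)$), contradicting $y\in K_\mu\setminus I_{h(x)}$. For the strict total orderings, if $\mu<\xi$ the inclusions $I_\mu\subseteq I_\xi$ and $K_\mu\subseteq K_\xi$ are definitional, and strictness again uses the realization of heights: any $r\in[\mu,\xi)\cap[0,n-1]$ yields an element separating $I_\mu$ from $I_\xi$ (and similarly for the $K$-family, taking $r$ slightly below $\xi$).

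I expect no serious obstacle; the main care point is the attained-versus-not-attained casework on the supremum, and making sure the degenerate values $\mu\in\{-1,0,n-1\}$ do not escape either classification. The density of the height function on $[0,n-1]$---a genuine feature of the real-valued case and the source of the non-principal ideals (contrast with $M_n$)---is what makes the argument go through cleanly and what ultimately distinguishes $\overline{M}_n$ from its integer counterpart.
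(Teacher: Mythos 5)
Your proof is correct and takes essentially the same route as the paper's: the master identity $\overline{M}_n x \overline{M}_n = \{y : h(y)\le h(x)\}$ obtained from Lemma~\ref{lemma:yequalszxw-1} together with (\ref{eq:rank-property}), the attained-versus-non-attained supremum dichotomy for an arbitrary ideal, and the direct verification via (\ref{eq:rank-property}) that each $K_\mu$ is closed under two-sided multiplication. You are in fact slightly more explicit than the paper on two points it leaves implicit (that every height value in $\{-1\}\cup[0,n-1]$ is realized, and that the $K_\mu$ are genuinely non-principal); the only nit is that in your strictness argument the separating height should be taken in $(\mu,\xi]$ rather than $[\mu,\xi)$, since $r=\mu$ gives an element of $I_\mu$.
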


\begin{proof} Define the sets $I_\mu$ by (\ref{eq:principal-ideals-definition}) and choose an $x\in\overline{M}_n$ such that $h(x)=\mu$. The iff statement of Lemma~\ref{lemma:yequalszxw-1} can then be rephrased as: $y\in I_\mu\iff y\in \overline{M}_n x \overline{M}_n$. We have thus shown (\ref{eq:principal-ideals-given-by}). Therefore all principal ideals are given in (\ref{eq:principal-ideals-definition}).

The special cases in (\ref{eq:ideals-special-cases}) follow from (\ref{eq:rank-special-cases}) and (\ref{eq:principal-ideals-definition}). 

We now let ${I}$ be an \textit{arbitrary} ideal. From
\begin{equation}
\label{eq:ideals-union}
    {I}=\overline{M}_n{I}\overline{M}_n=\cup_{x\in {I}}\overline{M}_n x\overline{M}_n,
\end{equation}
 we see that ${I}$ is a union of principal ideals. By (\ref{eq:principal-ideals-definition}), these are totally ordered sets. If ${I}$ contains an element $x$ such that $h(y)\le h(x)$ for all $y\in{I}$, then the union in  (\ref{eq:ideals-union}) equals ${I}_\mu$, where $\mu=h(x)=\mathrm{max}_{y\in I}\{h(y)\}$, so that  ${I}$ is itself a principal ideal. If there is no such element $x\in I$---i.e., if the subset $\{h(y): y\in{I}\}$ of $\mathbb{R}$
has no maximum---then the union in (\ref{eq:ideals-union}) is one of the totally ordered sets in (\ref{eq:ideals-definition}), namely ${K}_\mu$, where $\mu=\mathrm{sup}_{y\in I}\{h(y)\}$.

It remains to show, conversely, that all the ${K}_\mu$ defined in (\ref{eq:ideals-definition}) are ideals. Let $y\in\overline{M}_n{K}_\mu$, so that $y=zw$ with $z\in \overline{M}_n$ and $w\in {K}_\mu$. It follows from (\ref{eq:rank-property}) that $h(y)\le h(w)$. Since $h(w)<\mu$, we have $h(y)<\mu$, so that $y\in {K}_\mu$. Hence $\overline{M}_n{K}_\mu
\subseteq {K}_\mu$, so ${K}_\mu$ is a left ideal by definition. Similarly, ${K}_\mu$ is a right ideal. Thus ${K}_\mu$ is a two-sided ideal, completing our proof. 
\end{proof}

Theorem~6 of \cite{fikioris-fikioris} discusses $j$th roots for the integer case: In $M_n$, a nonzero element $x=\prec k, d, m \succ$ has a $j$th root iff $d$ is an integer multiple of $j$; and the $j$th root, when it exists, is unique. The theorem that follows shows that, in  $\overline{M}_n$, a unique root $y$ \textit{always} exists. 
In other words (and in complete analogy to the case of $\mathbb{R}_{>0}$ and its subset $\mathbb{N}$) any nonzero element $x\in \overline{M}_n$ ($x\in \mathbb{R}_{>0}$) has a unique root $y\in\overline{M}_n$ ($y\in \mathbb{R}_{>0}$); but in the special case $x\in M_n$ ($x\in \mathbb{N}$), the said root $y$ is not necessarily in $M_n$ (in $\mathbb{N}$).

\begin{theorem}
\label{th:roots}
Let $j\in\mathbb{N}$. The element $x=\prec k, d, m \succ\,\,\in \overline{M}_n\setminus\{\zero\}$ has a unique $j$th root in $\overline{M}_n$. It is given by $y=\prec k', d', m' \succ\,\,\in \overline{M}_n\setminus\{\zero\}$, where
\begin{equation}
\label{eq:root-parameters}
k'=k+(j-1)\min(0,d'),\quad
d'=\frac{d}{j},\quad  m'=m+(j-1)\max(0,d').
\end{equation}
\end{theorem}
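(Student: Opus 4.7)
The plan is to read off the candidate root $y$ from Lemma~\ref{lemma:powers} by inverting the power formula (\ref{eq:powers-2}). Given a prospective root $y = \prec k', d', m' \succ$ whose $j$th power equals $x = \prec k, d, m \succ$, the three equations
$k' - (j-1)\min(0,d') = k$, $\ jd' = d$, $\ m' - (j-1)\max(0,d') = m$
immediately force $d' = d/j$ and then give exactly the expressions for $k'$ and $m'$ in (\ref{eq:root-parameters}). So uniqueness will be essentially built into the construction, once existence is checked.

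For existence, I would first verify that the triple $(k', d', m')$ defined by (\ref{eq:root-parameters}) satisfies the defining inequalities of (\ref{eq:m-definition}), so that $y \in \overline{M}_n \setminus \{\zero\}$. This is a short case split on the sign of $d$. If $d \ge 0$, then $d' \ge 0$, $\min(0,d')=0$, $\max(0,d')=d'$, so $k' = k$ and $m' = m + (j-1)d/j$; the conditions $1 \le k' \le m' \le n - d'$ reduce, after clearing $j$, to $1 \le k \le m \le n - d$, which is given. The case $d < 0$ is symmetric, with $k'$ shifted and $m' = m$, reducing to $1 - d \le k \le m \le n$. Once $y$ is known to be a valid nonzero element, plugging $y$ into (\ref{eq:powers-2}) directly yields the triple $(k,d,m)$, and since $k \le m$ the first branch of (\ref{eq:powers-1}) applies, giving $y^j = x$.

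For uniqueness, suppose $y, \tilde y \in \overline{M}_n$ with $y^j = \tilde y^j = x$. Neither can be $\zero$ since $x \ne \zero$. Writing $y = \prec k', d', m' \succ$ and $\tilde y = \prec \tilde k, \tilde d, \tilde m \succ$, Lemma~\ref{lemma:powers} applies to both, and comparing the $d$-components gives $jd' = d = j\tilde d$, hence $d' = \tilde d$. With the sign of $d'$ fixed, the equations for the first and third coordinates in (\ref{eq:powers-2}) become linear in $k'$ (resp.\ $\tilde k$) and $m'$ (resp.\ $\tilde m$) with the same shift, so $k' = \tilde k$ and $m' = \tilde m$.

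The only mildly delicate point is the inequality-check that places $y$ in $\overline{M}_n$; the algebra for $y^j = x$ and for uniqueness is essentially a direct application of Lemma~\ref{lemma:powers}. I would therefore present the argument by stating the candidate $y$, performing the two-case verification of (\ref{eq:m-definition}), substituting into Lemma~\ref{lemma:powers} to confirm $y^j = x$, and closing with the uniqueness comparison above.
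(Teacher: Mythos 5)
Your proposal is correct and follows essentially the same route as the paper: both invert the power formula of Lemma~\ref{lemma:powers} to force $d'=d/j$ and read off $k'$, $m'$, then verify the inequalities of (\ref{eq:m-definition}) to place $y$ in $\overline{M}_n\setminus\{\zero\}$. The only difference is cosmetic: the paper treats $d\ge 0$ and extends to $d<0$ by taking inverses, while you handle the second sign case by the symmetric direct computation, which checks out.
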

\begin{proof}
Assume $d\ge 0$, so that (\ref{eq:m-definition}) implies
\begin{equation}
\label{eq:root-condition-on-x}
    1\le k\le m\le n-d.
\end{equation}
We seek $y\in \overline{M}_n$ such that $x=y^j$. As $y\ne \zero$, we set $y=\prec k', d', m' \succ$. By Lemma~\ref{lemma:powers}, $d'=d/j\ge 0$. Invoking (\ref{eq:m-definition}), we thus require
\begin{equation}
\label{eq:root-condition-2}
    1\le k'\le m'\le n-d'.
\end{equation}
By Lemma~\ref{lemma:powers}, $x=y^j$ is equivalent to the three equations
\begin{equation*}
\label{eq:root-condition-1}
k=k',\quad d=jd',\quad  m=m'-(j-1)d'.
\end{equation*}
These are uniquely solvable for $k'$, $d'$,  $m'$ and the solution is given in (\ref{eq:root-parameters}). Eqns. (\ref{eq:root-parameters}) and  (\ref{eq:root-condition-on-x}) then imply (\ref{eq:root-condition-2}), completing the proof for $d\ge 0$. We can extend to $d<0$ by taking the inverse.
\end{proof}

\begin{remark}
    One could also consider the submonoid $A_n$ of $\overline{M}_n$ in which $k,d,m\in\mathbb{Q}$. For $x\in A_n\setminus\{\zero\}$, the unique $j$th root $y$ given in (\ref{eq:root-parameters}) also belongs to $A_n\setminus\{\zero\}$. Thus in  $A_n\setminus\{\zero\}$, a unique root $y$ always exists. Consequently, despite the aforementioned analogy of $\overline{M}_n$ to  $\mathbb{R}_{>0}$ and $M_n$ to $\mathbb{N}$, the submonoid $A_n$ is not analogous to $\mathbb{Q}_{>0}$.
\end{remark}

By (\ref{eq:m-definition}) and (\ref{eq:points}), the set $\overline{B}_n=\{\zero\}\cup\overline{P}_n$ is given by 
\begin{equation}
\label{eq:brandt-definition}
\overline{B}_n=\{\zero\}\cup\overline{P}_n=\{\zero\}
\cup
\{\prec k, d, k \succ: \  1-\min(0,d)\le k \le n-\max(0,d)\}.
\end{equation}

Example~2 of \cite{fikioris-fikioris} shows that, in the integer case, the subsemigroup $B_n$ of $\overline{B}_n$ is isomorphic to a certain Brandt semigroup of finite cardinality. The theorem that follows is a generalization that can be proved in a number of ways. We give a proof that builds upon previous results in the present paper, as well as concepts and results on inverse semigroups that can be found in \cite{Lawson}. 

\begin{theorem}
$\overline{B}_n$ is a Brandt semigroup.
\end{theorem}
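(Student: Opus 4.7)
The plan is to verify a standard characterization of Brandt semigroups: an inverse semigroup with zero is Brandt if and only if it is $0$-simple (equivalently, $BxB = B$ for every nonzero $x \in B$) and all its nonzero idempotents are primitive. All three ingredients follow quickly from results already established in the paper.

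First, I would confirm that $\overline{B}_n$ is an inverse subsemigroup of $\overline{M}_n$. Closure under the inverse (\ref{eq:transpose}) is immediate, since $\prec k,d,k\succ^T = \prec k+d,-d,k+d\succ$ remains a point; and closure under multiplication follows from (\ref{eq:multiplication-finite})--(\ref{eq:doubleprime}) specialized to $m=k$ and $m'=k'$, where the product is either $\zero$ or (when $k = k'-d$) the point $\prec k,\, d+d',\, k\succ \in \overline{P}_n$. Hence $\overline{B}_n$ inherits the inverse-semigroup axioms and the zero from $\overline{M}_n$. For $0$-simplicity, I would invoke Lemma~\ref{lemma:yequalszxw-2} directly: fixing any nonzero $x \in \overline{B}_n$, the lemma produces, for each $y \in \overline{B}_n$, elements $z,w \in \overline{B}_n$ with $y = zxw$, whence $\overline{B}_n\, x\, \overline{B}_n = \overline{B}_n$.

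Finally, I would verify primitivity of the nonzero idempotents. By Proposition~\ref{th:basic-theorem} these are precisely the elements $\prec k,0,k\succ$ with $1 \le k \le n$, and Corollary~\ref{corollary:natural-partial-order} shows that $\prec k,0,k\succ \,\le\, \prec k',0,k'\succ$ forces $k \ge k'$ and $k \le k'$, hence equality. So distinct nonzero idempotents of $\overline{B}_n$ are incomparable in the natural partial order, which certainly implies primitivity. Combining the three ingredients yields that $\overline{B}_n$ is a Brandt semigroup. The only real obstacle is organizational: one must select the right characterization of Brandt semigroup and be careful to phrase $0$-simplicity inside $\overline{B}_n$ itself rather than inside the ambient $\overline{M}_n$---which is exactly why Lemma~\ref{lemma:yequalszxw-2} was stated with $z,w$ constrained to lie in $\{\zero\}\cup\overline{P}_n$.
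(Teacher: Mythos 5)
Your proposal is correct and follows essentially the same route as the paper's own proof: both verify that $\overline{B}_n$ is an inverse subsemigroup, deduce primitivity of the nonzero idempotents from Corollary~\ref{corollary:natural-partial-order} (the paper notes the natural partial order reduces to equality on all of $\overline{P}_n$, which subsumes your idempotent-only check), establish $0$-simplicity via Lemma~\ref{lemma:yequalszxw-2}, and invoke the same characterization of Brandt semigroups as inverse $0$-simple semigroups with primitive idempotents. Your explicit computation of the product of two points and your remark on why Lemma~\ref{lemma:yequalszxw-2} constrains $z,w$ to $\{\zero\}\cup\overline{P}_n$ are accurate refinements of the same argument, not a different one.
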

\begin{proof}
By (\ref{eq:multiplication-finite}), (\ref{eq:transpose}), and (\ref{eq:brandt-definition}), $\overline{B}_n$ is an inverse subsemigroup of $\overline{M}_n$. Therefore $\overline{B}_n$ inherits its natural partial order $\le$ from $\overline{M}_n$. By (\ref{eq:brandt-definition}) and  Corollary~\ref{corollary:natural-partial-order}, $x\le y$ iff $x=y$ ($x,y\in \overline{P}_n$), meaning  that in $\overline{P}_n=\overline{B}_n\setminus\{\zero\}$, the $\le$  reduces to an equality.  Equivalently \cite{Lawson}, all idempotents of $\overline{B}_n\setminus\{\zero\}$ are primitive.

Now let $I\subseteq \overline{B}_n$ be an ideal of $\overline{B}_n$.  Assume $I\ne \{\zero\}$, so that some nonzero $x$ belongs to $I$.  Choose any $y$ in $\overline{B}_n$. By Lemma~\ref{lemma:yequalszxw-2}, this $y$ 
 belongs to the principal ideal  $\overline{B}_nx\overline{B}_n$, so that $\overline{B}_n\subseteq\overline{B}_nx\overline{B}_n$. As $\overline{B}_nx\overline{B}_n\subseteq I$, we further have $\overline{B}_n\subseteq I$, so $I=\overline{B}_n$. Therefore the only ideals of $\overline{B}_n$ are  $\{\zero\}$ and $\overline{B}_n$ itself, meaning that $\overline{B}_n$ is $0$-simple.

Inverse, $0$-simple semigroups with at least one primitive idempotent are Brandt semigroups \cite{Lawson}, completing our proof. 
\end{proof}

Corollary~6 of \cite{fikioris-fikioris} determines a minimal generating set for $M_n$ that, for any $n$, consists of only three elements.  Thus the rank of $M_n$ (integer case) is 3. Since $\overline{M}_n$ is uncountable, the situation is very different. In what follows, we prove that $\overline M_n$ has infinite Sierpi\'{n}ski rank \cite{peresse2006,peresse2009generating,east2012}, meaning that there are countable subsets of $\overline M_n$ that cannot be generated by finitely many elements of $\overline M_n$. 
\begin{theorem}
The Sierpi\'{n}ski rank of $\overline M_n$ is infinite.
\end{theorem}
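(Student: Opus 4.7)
The plan is to leverage the fact that, by (\ref{eq:doubleprime}), the $d$-coordinate of a nonzero product in $\overline{M}_n$ is the sum of the $d$-coordinates of the factors. Consequently, if $T=\{t_1,\ldots,t_r\}\subseteq \overline{M}_n$ is any finite set of generators and $x$ is any nonzero element of the subsemigroup $\langle T\rangle$, then (after discarding any $t_j=\zero$, since a product containing $\zero$ is $\zero$) the $d$-coordinate of $x$ is a non-negative integer combination of the $d$-coordinates $d_1,\ldots,d_r$ of the surviving generators. In particular, this $d$-coordinate lies in the $\mathbb{Q}$-linear span of $\{d_1,\ldots,d_r\}$, a $\mathbb{Q}$-subspace of $\mathbb{R}$ of dimension at most $r$.

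With this observation in hand, I would construct an explicit countable subset $S\subseteq \overline{M}_n$ that no finite $T$ can capture. Since $\mathbb{R}$ has uncountable dimension over $\mathbb{Q}$, I can inductively select a countable $\mathbb{Q}$-linearly independent sequence $\alpha_1,\alpha_2,\ldots$ inside $(0,1)$: at each step, the $\mathbb{Q}$-span of the finitely many elements already chosen is countable, while $(0,1)$ is uncountable, so a fresh independent element is available. The point elements $x_i=\prec 1,\alpha_i,1\succ$ satisfy the constraints in (\ref{eq:m-definition}) (recall $n\ge 2$), so $S=\{x_i:i\in\mathbb{N}\}$ is a legitimate countable subset of $\overline{M}_n\setminus\{\zero\}$.

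The contradiction is then immediate: if $S\subseteq \langle T\rangle$ for some finite $T$, the first paragraph places every $\alpha_i$ inside a fixed $\mathbb{Q}$-subspace of dimension at most $r$, yet such a space admits at most $r$ linearly independent vectors, contradicting the $\mathbb{Q}$-independence of the infinite family $\{\alpha_i\}$. Thus no finite $T\subseteq \overline{M}_n$ generates $S$, and the Sierpi\'{n}ski rank of $\overline{M}_n$ is infinite. I do not anticipate a genuine obstacle here: the only routine verification is that products of nonzero elements accumulate $d$-values additively, which is a one-line induction via (\ref{eq:doubleprime}); and the existence of a countable $\mathbb{Q}$-independent set inside $(0,1)$ is a standard consequence of the uncountability of Hamel bases for $\mathbb{R}$ over $\mathbb{Q}$.
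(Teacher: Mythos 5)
Your proof is correct, and it takes a genuinely different route from the paper's. The paper argues via the height function: it takes the explicit sequence $y_i=\prec 1, 2^{-i}, n-2^{-i}\succ$, whose heights $h(y_i)=n-2^{-i}-1$ increase to the supremum $n-1$, and invokes the submultiplicativity (\ref{eq:rank-property}) to show that any finite generating set caps all attainable heights strictly below $n-1$; because $\one$ has the maximal height $n-1$, that argument first has to pass to $\overline{S}_n=\overline{M}_n\setminus\{\one\}$ via (\ref{eq:sn-semigroup-2}). You instead use the additive invariant $d$: since $\zero$ is absorbing, a nonzero product has all partial products nonzero, so by (\ref{eq:multiplication-finite})--(\ref{eq:doubleprime}) its $d$-coordinate is the sum of the factors' $d$-coordinates, confining the $d$-values of all nonzero elements of $\langle T\rangle$ to an at most $r$-dimensional $\mathbb{Q}$-subspace of $\mathbb{R}$, which your infinite $\mathbb{Q}$-independent family $\{\alpha_i\}\subset(0,1)$ defeats; and your witnesses $\prec 1,\alpha_i,1\succ$ do satisfy (\ref{eq:m-definition}) since $n-\alpha_i>1$. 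Your route is in effect exploiting the grading that underlies the paper's strongly $E^*$-unitary theorem (the $0$-morphism $x\mapsto e^{id/(n-1)}$ there is the multiplicative shadow of your additive map $x\mapsto d$), and it buys two things: it needs no reduction to $\overline{S}_n$, since the identity has $d=0$ and contributes nothing to the span, and, because your witnesses are points, it proves something stronger, namely that even the Brandt subsemigroup $\overline{B}_n=\{\zero\}\cup\overline{P}_n$ has infinite Sierpi\'{n}ski rank --- a conclusion the paper's height-based witnesses (of large height) cannot give. What the paper's approach buys in exchange is complete explicitness, with no selection of a $\mathbb{Q}$-independent family (though yours could be made explicit as well, e.g.\ $\alpha_i=e^{-i}$, using transcendence of $e$), and it reuses machinery, $h$ and (\ref{eq:rank-property}), that already drives the ideal-structure results of Theorem~\ref{th:principal-ideals}.
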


\begin{proof}
It suffices to prove that the Sierpi\'{n}ski rank of $\overline{S}_n=\overline{M}_n\setminus\{\one\}$ is infinite, see  (\ref{eq:sn-semigroup-2}). By (\ref{eq:m-definition}),  the countable set  $A_n = \{y_i : i \in \mathbb{N}\}$ with elements
\begin{equation*}
y_i=\prec 1, 2^{-i}, n-2^{-i} \succ,
\end{equation*}
is a well-defined subset of $\overline{S}_n$. By (\ref{eq:v}), the sequence of heights $h(y_i)$ increases, with 
\begin{equation}
\label{eq:sierpinski-nminus1}
\sup_{i\in\mathbb{N}}h(y_i)=\lim_{i\to\infty} \left(n-2^{-i}-1\right)=n-1. 
\end{equation}

Assume that $A_n$ is generated by a finite set with $r$ elements
$G_n = \{g_1, \ldots, g_r\}$. For every $i \in \mathbb{N}$ this implies that $y_i = g_{i_1}g_{i_2}\ldots g_{i_s}$ for some $i_1, i_2,\ldots, i_s \in \{1,\ldots, r\}$. By \eqref{eq:rank-property} this means that $h(y_i) \le \min\{h(g_{i_1}),h(g_{i_2}),\ldots, h(g_{i_s})\} \le h_{\max}$, where $h_{\max} = \max\{h(g_j) : j \in \{1,\ldots, r\}\}$. Since $\one\notin G_n\subset\overline{S}_n$, (\ref{eq:rank-range}) and (\ref{eq:rank-special-cases}) give $h_{\max}<n-1$, which contradicts (\ref{eq:sierpinski-nminus1}).
\end{proof}

\bibliography{ref.bib}

\end{document}